\newcommand{\Aut}{\mathrm{Aut}}
\newcommand{\Hom}{\mathrm{Hom}}
\newcommand{\Mul}{\mathbf{M}}
\newcommand{\Sqr}{\mathbf{S}}
\newcommand{\mul}{\mathbf{m}}
\newcommand{\bbmu}{\mu\!\!\!\!\mu}
\newcommand{\isom}{\cong}
\renewcommand{\AA}{\mathbb{A}}
\newcommand{\PP}{\mathbb{P}}
\newcommand{\ZZ}{\mathbb{Z}}
\newcommand{\cI}{\mathscr{I}}
\newcommand{\cL}{\mathscr{L}}
\newcommand{\cO}{\mathcal{O}}
\newcommand{\ignore}[1]{}
\newcommand{\oO}{O}
\newcommand{\fs}{\mathfrak{s}}
\newcommand{\ccomma}{\raisebox{0.4ex}{,}}
\newtheorem{theorem}{Theorem}[section]
\newtheorem{lemma}[theorem]{Lemma}
\newtheorem{corollary}[theorem]{Corollary}
\theoremstyle{definition}
\newtheorem{definition}[theorem]{Definition}
\begin{document}

\title{The geometry of efficient arithmetic on elliptic curves}
\begin{abstract}
The arithmetic of elliptic curves, namely polynomial addition 
and scalar multiplication, can be described in terms of global 
sections of line bundles on $E\times  E$ and $E$, respectively,
with respect to a given projective embedding of $E$ in $\PP^r$. 
By means of a study of the finite dimensional vector spaces of 
global sections, we reduce the problem of constructing and 
finding efficiently computable polynomial maps defining the 
addition morphism or isogenies to linear algebra.  
We demonstrate the effectiveness of the method by improving 
the best known complexity for doubling and tripling, by 
considering families of elliptic curves admiting a $2$-torsion 
or $3$-torsion point.
\end{abstract}
\author{David Kohel}
\thanks{This work was supported by a project of the Agence Nationale de la Recherche, reference ANR-12-BS01-0010-01.}
\address{Aix Marseille Université, CNRS, Centrale Marseille, I2M, UMR 7373, 13453 Marseille, France}
\curraddr{\newline 
Institut de Mathématiques de Marseille (I2M)\newline
163 avenue de Luminy, Case 907\newline
13288 Marseille, cedex 9\newline France}
\email{David.Kohel@univ-amu.fr}

\maketitle

\section{Introduction}

The computational complexity of arithmetic on an elliptic curve, 
determined by polynomial maps, depends on the choice of projective 
embedding of the curve. Explicit counts of multiplications and 
squarings are expressed in terms of operations on the coordinate 
functions determined by this embedding. The perspective of this work 
is to reduce the determination of the complexity of evaluating a 
morphism, up to additions and multiplication by constants, to a 
problem of computing a $d$-dimensional subspace $V$ of the space 
of monomials of degree~$n$. This in turn can be conceptually 
reduced to the construction of a flag 
$
V_1 \subset V_2 \subset \cdots \subset V_d = V.
$
For this purpose we recall results from Kohel~\cite{Kohel2011} 
for the linear classification of projectively normal models.
We generalize this further by analysing the conditions under which 
a degree~$n$ isogeny is determined by polynomials of degree~$n$
in terms of the given projective embeddings. This approach allows us 
to derive conjecturally optimal or nearly optimal algorithms for 
operations of doubling and tripling, which form the basic building 
blocks for efficient scalar multiplication.

\section{Background}

An elliptic curve $E$ is a projective nonsingular genus one curve 
with a fixed base point.  In order to consider the arithmetic, 
namely addition and scalar multiplication defined by polynomial 
maps, we need to fix the additional structure of a 
projective embedding.  We call an embedding $\iota: E \rightarrow 
\PP^r$ a (projective) model for $E$. A model given by a complete 
linear system is called {\it projectively normal} 
(see Birkenhake-Lange~\cite[Chapter~7, Section~3]{BirkenhakeLange} or 
Hartshorne~\cite[Chapter~I, Exercise~3.18 \& Chapter~II, 
Exercise~5.14]{Hartshorne} for the general definition and 
its equivalence with this one for curves).
If $\iota: E \rightarrow \PP^r$ is a projectively normal model, 
letting $\{ X_0,\dots, X_r \}$ denote the coordinate functions 
on $\PP^r$, we have a surjection of rings:
$$
\iota^* : k[\PP^r] = k[X_0,\dots,X_r] \longrightarrow 
k[E] = \frac{k[X_0,\dots,X_r]}{I_E}\ccomma
$$
where $I_E$ is the defining ideal for the embedding. 
In addition, using the property that $\iota$ is given by a 
complete linear system, there exists $T$ in $E(k)$ such 
that every hyperplane intersects $E$ in $d = r + 1$ points 
$\{ P_0,\dots,P_r \} \subset E(\bar{k})$, with multiplicities, 
such that $P_0 + \cdots P_r = T$, 
and we say that the degree of the embedding is $d$.  
The invertible sheaf $\cL$ attached to the embedding is 
$\iota^* \cO_{\PP^r}(1)$, where $\cO_{\PP^r}(1)$ is the 
sheaf spanned by $\{X_0,\dots,X_r\}$.  
Similarly, the space of global sections of $\cO_{\PP^r}(n)$ 
is generated by the monomials of degree $n$ in the $X_i$. 
Let $\cL^n$ denote its image under $\iota^*$, then the global 
sections $\Gamma(E,\cL^n)$ is the finite dimensional $k$-vector 
space spanned by monomials of degree $n$ modulo $I_E$, and 
hence
$$
k[E] = \bigoplus_{n=0}^\infty \Gamma(E,\cL^n), 
$$
which is a subspace of $k(E)[X_0]$.

Now let $D$ be the divisor on $E$ cut out by $X_0 = 0$, then
we can identify $\Gamma(E,\cL^n)$ with the Riemann--Roch space 
associated to $nD$:
$$
L(nD) = \{ f \in k(E)^* \;|\; \mathrm{div}(f) \ge -nD \} \cup \{0\}.
$$
More precisely, we have $\Gamma(E,\cL^n) = L(nD) X_0^n \subset k(E)X_0^n$ 
for each $n \ge 0$. 
While the dimension of $L(nD)$ is $nd$, the dimension of the space 
of all monomials of degree $n$ is:
$$
\dim_k\big(\Gamma(\PP^r,\cO_{\PP^r}(n))\big) 
  = \binom{n+r}{r} = \binom{n+d-1}{d-1}\cdot
$$
The discrepancy is accounted for by relations of a given degree in 
$I_E$. More precisely, for the ideal sheaf $\cI_E$ of $E$ on $\PP^r$, 
with Serre twist $\cI_E(n) = \cI_E \otimes \cO_{\PP^r}(n)$, the 
space of relations of degree $n$ is $\Gamma(\PP^r,\cI_E(n))$, such 
that the defining ideal of $E$ in $\PP^r$ is
$$ 
I_E = \bigoplus_{n=1}^\infty \Gamma(\PP^r,\cI_E(n)) \subset k[X_0,\dots,X_r].
$$
Consequently, each polynomial in the quotient $\Gamma(E,\cL^n) \subset k[E]$ 
represents a coset $f + \Gamma(\PP^r,\cI_E(n))$ of polynomials. 

From the following table of dimensions:
$$
\begin{array}{r|ccr|cc}
\multicolumn{3}{c}{d = 3:}\\[1mm] 
    n & \binom{n+r}{r} & nd \\[1mm]  \hline 
    1 & 3  &  3    \\
    2 & 6  &  6    \\
    3 & 10 &  9    
\end{array}
\hspace{4mm}
\begin{array}{r|cc}
\multicolumn{3}{c}{d = 4:}\\[1mm] 
    n & \binom{n+r}{r} & nd \\[1mm]  \hline 
    1 &  4 &    4 \\
    2 & 10 &    8 \\ 
    3 & 15 &   12 
\end{array}
\hspace{4mm}
\begin{array}{r|cc}
\multicolumn{3}{c}{d = 5:}\\[1mm] 
    n & \binom{n+r}{r} & nd \\[1mm]  \hline 
    1 &  5 &    5 \\
    2 & 15 &   10 \\ 
    3 & 35 &   15 
\end{array}
\hspace{4mm}
\begin{array}{r|cc}
    \multicolumn{3}{c}{d = 6:}\\[1mm] 
    n & \binom{n+r}{r} & nd \\[1mm]  \hline 
    1 &  6 &    6 \\
    2 & 21 &   12 \\ 
    3 & 56 &   18 
\end{array}
$$
we see the well-known result that a degree-3 curve 
in $\PP^2$ is generated by a cubic relation, and a 
degree-4 curve in $\PP^3$ is the intersection of 
two quadrics. Similarly, a quintic model in $\PP^4$
and a sextic model in $\PP^5$ are generated by a 
space of quadrics of dimensions 5 and 9, respectively.

When considering polynomial maps between curves, this 
space of relations $\Gamma(\PP^r,\cI_E(n))$, which 
evaluate to zero, gives a source of ambiguity but also 
room for optimization when evaluating a representative 
polynomial $f$ in its class. 

\subsection*{Addition law relations}

A similar analysis applies to the set of addition laws,
from $E \times E$ to $E$.
The set polynomials of bidegree $(m,n)$ on $E \times E$
are well-defined modulo relations in 
$$
\Gamma(\PP^r,\cI_E(m)) \otimes_k \Gamma(\PP^r,\cO_{\PP^r}(n)) + 
\Gamma(\PP^r,\cO_{\PP^r}(m)) \otimes_k \Gamma(\PP^r,\cI_E(n)).
$$
As the kernel of the surjective homomorphism
$$
\Gamma(\PP^r,\cO_{\PP^r}(m)) \otimes_k \Gamma(\PP^r,\cO_{\PP^r}(n))
\longrightarrow 
\Gamma(E,\cL^m) \otimes_k \Gamma(E,\cL^n)),
$$
its dimension is
$$
\binom{m+r}{r} \binom{n+r}{r} - mnd^2.
$$

In particular, this space of relations will be of interest in 
the case of minimal bidegree $(m,n) = (2,2)$ for addition laws, 
where it becomes:
$$
\binom{d+1}{2}^2 - 4d^2 = \frac{d^2(d-3)(d+5)}{4}\cdot
$$
For $d = 3$, this dimension is zero since there are no degree-2 
relations, but for $d = 4$, $5$ or $6$, the dimensions, 36, 125, 
and 297, respectively, are significant and provide a large search 
space in which to find sparse or efficiently computable forms in 
a coset.  

\subsection*{A category of pairs}

The formalization of the above concepts is provided by the 
introduction of a category of pairs $(X,\cL)$, consisting 
of a variety $X$ and very ample invertible sheaf $\cL$.  
For more general varieties $X$, in order to maintain 
the correspondence between the spaces of sections 
$\Gamma(X,\cL^n)$ and spaces of homogeneous functions of 
degree $n$ on $X$, the embedding determined by 
$\cL$ should be projectively normal.
The isomorphisms $\phi: (X_1,\cL_1) \rightarrow (X_2,\cL_2)$ in 
this category are isomorphisms $X_1 \rightarrow X_2$ for which 
$\phi^*\cL_2 \isom \cL_1$. These are the linear isomorphisms 
whose classification, for elliptic curves, is recalled in the 
next section.  
In general the space of tuples of defining polynomials of degree~$n$ 
can be identified with $\Hom(\phi^*\cL_2,\cL_1^n)$. The {\it exact} 
morphisms, for which $\phi^*\cL_2 \isom \cL_1^n$ for some $n$, are 
the subject of Section~\ref{sec:exact_morphisms}.  

\section{Linear classification of models}
\label{sec:linear_isomorphisms}

Hereafter we consider only projectively normal models. 
A linear change of variables gives a model with equivalent 
arithmetic, up to additions and multiplication by constants, 
thus it is natural to consider {\it linear isomorphisms} 
between models of elliptic curves. 
In this section we recall results from Kohel~\cite{Kohel2011} 
classifying elliptic curve morphisms which are linear. 
This provides the basis for a generalization to exact morphism 
in the next section.

\begin{definition}
Suppose that $E \subset \PP^r$ is a projectively normal 
model of an elliptic curve. 
The point $T = P_0 + \cdots P_r$, where 
$H \cap E = \{P_0,\dots,P_r\}$ for a hyperplane $H$ 
in $\PP^r$, is an invariant of the embedding called 
the embedding class of the model. The divisor 
$r(\oO)+(T)$ is called the embedding divisor class. 
\end{definition}

We recall a classification of elliptic curves models 
up to projective linear equivalence (cf.~Lemmas~2 
and~3 of Kohel~\cite{Kohel2011}).  

\begin{theorem}
Let $E_1$ and $E_2$ be two projectively normal models 
of an elliptic curve $E$ in $\PP^r$. 
There exists a linear transformation of $\PP^r$ inducing 
an isomorphism of $E_1$ to $E_2$ if and only if $E_1$ 
and $E_2$ have the same embedding divisor class. 
\end{theorem}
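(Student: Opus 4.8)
The plan is to reinterpret the embedding divisor class sheaf-theoretically and then reduce the statement to a change of basis among global sections. Write $\iota_j : E \to \PP^r$ for the embedding defining the model $E_j$, and set $\cL_j = \iota_j^* \cO_{\PP^r}(1)$ for $j = 1, 2$. By the Abel--Jacobi computation recalled in the Background section, a hyperplane section $P_0 + \cdots + P_r$ of $E_j$ is linearly equivalent to $r(\oO) + (T_j)$, where $T_j = P_0 + \cdots + P_r$ is the embedding class; hence the embedding divisor class of $E_j$ is precisely the class of $\cL_j$ in $\mathrm{Pic}^{r+1}(E)$. Consequently the two models have the same embedding divisor class if and only if $\cL_1 \cong \cL_2$ as invertible sheaves. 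Throughout I would take the isomorphism $E_1 \to E_2$ to be compatible with the identifications of both models with $E$, that is, to cover the identity automorphism of $E$.

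For the forward direction, I would suppose that a linear transformation $A \in \mathrm{PGL}_{r+1}(k)$ induces this isomorphism, so that $A \circ \iota_1 = \iota_2$ as maps $E \to \PP^r$. Since $\mathrm{Pic}(\PP^r) = \ZZ \cdot \cO_{\PP^r}(1)$ and every linear automorphism preserves the ample generator, one has $A^* \cO_{\PP^r}(1) \cong \cO_{\PP^r}(1)$. Pulling back along $\iota_1$ then gives $\cL_1 = \iota_1^* \cO_{\PP^r}(1) \cong \iota_1^* A^* \cO_{\PP^r}(1) = (A \circ \iota_1)^* \cO_{\PP^r}(1) = \iota_2^* \cO_{\PP^r}(1) = \cL_2$, so the embedding divisor classes coincide.

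For the reverse direction, I would assume $\cL_1 \cong \cL_2$ and fix an isomorphism $\psi : \cL_1 \to \cL_2$, which is unique up to a scalar in $k^*$ because $E$ is projective and integral. Because each $\iota_j$ is given by a complete linear system and the model is projectively normal, the map $\iota_j^* : \Gamma(\PP^r, \cO_{\PP^r}(1)) \to \Gamma(E, \cL_j)$ is an isomorphism of $(r+1)$-dimensional spaces, so the coordinate functions $\iota_j^* X_0, \dots, \iota_j^* X_r$ form a basis of $\Gamma(E, \cL_j)$. Transporting the first basis across $\psi$, both $\{\psi(\iota_1^* X_i)\}$ and $\{\iota_2^* X_i\}$ become bases of $\Gamma(E, \cL_2)$, related by an invertible matrix $A = (a_{ij}) \in \mathrm{GL}_{r+1}(k)$ via $\iota_2^* X_i = \sum_j a_{ij}\, \psi(\iota_1^* X_j)$. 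This matrix defines a linear transformation of $\PP^r$, and a pointwise comparison of the coordinate functions shows that it carries $\iota_1$ to $\iota_2$; the scalar ambiguity in $\psi$ merely rescales $A$, so the induced element of $\mathrm{PGL}_{r+1}(k)$ is well defined.

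The routine pieces are the Abel--Jacobi identification of the hyperplane class and the verification that the change of basis matrix acts as the asserted projective transformation. The conceptual heart, and the step most in need of care, is the reverse direction: one must invoke projective normality at degree one to guarantee that the coordinate functions span the \emph{entire} space $\Gamma(E, \cL_j)$ rather than a proper subspace, since it is exactly this completeness that allows an arbitrary change of basis of global sections to be realized by a single linear automorphism of the ambient $\PP^r$.
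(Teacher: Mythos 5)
Your argument is correct, but note that the paper itself offers no proof of this theorem: it is recalled from Lemmas~2 and~3 of the cited reference \cite{Kohel2011}, so there is no internal proof to compare against. What you give is the standard argument and it is sound: the identification of the embedding divisor class $r(\oO)+(T_j)$ with the class of $\cL_j=\iota_j^*\cO_{\PP^r}(1)$ in $\mathrm{Pic}^{r+1}(E)$ via Abel--Jacobi, the pullback $A^*\cO_{\PP^r}(1)\isom\cO_{\PP^r}(1)$ for the forward direction, and the change-of-basis argument on $\Gamma(E,\cL_2)$ for the converse, where projective normality (completeness of the linear system in degree one) is exactly what guarantees that the coordinate functions form a basis rather than spanning a proper subspace. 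You correctly isolated that last point as the crux.

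One remark on your opening convention, which is more than bookkeeping: your choice to require the isomorphism $E_1\to E_2$ to cover $\mathrm{id}_E$ is actually \emph{necessary} for the statement to be true, not merely convenient. If one only required an isomorphism of elliptic curves covering an arbitrary $\alpha\in\Aut(E,\oO)$, the forward direction would fail: taking $\iota_2=\iota_1\circ[-1]$ gives the same subvariety of $\PP^r$ (so the identity of $\PP^r$ induces an isomorphism of elliptic curves $E_1\to E_2$ covering $[-1]$), yet the embedding classes are $T_1$ and $-T_1$, which differ whenever $T_1\notin E[2]$. This is the same phenomenon the paper's own remark flags for translations when the isomorphism is weakened to one of curves. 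Your reading matches the statement actually proved in \cite{Kohel2011} (two embeddings differ by a projective linear transformation if and only if their embedding divisors are linearly equivalent), so the proof stands as written; it would be worth making explicit that the biconditional is being asserted under that convention.
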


\noindent{\bf Remark.}
The theorem is false if the isomorphism in the 
category of elliptic curves is weakened to an 
isomorphism of curves. 
In particular, if $Q$ is a point of $E$ such that 
$[d](Q) = T_2-T_1$, then the pullback of the 
embedding divisor class $r(O)+(T_2)$ by the 
translation morphism $\tau_Q$ is $r(O)+(T_1)$,
and $\tau_Q$ is given by a projectively linear 
transformation 
(see Theorem~\ref{thm:linear-translation} for 
this statement for $T_1 = T_2$). 

\begin{corollary}
Two projectively normal models for an elliptic curve  
of the same degree have equivalent arithmetic up to 
additions and multiplication by fixed constants if 
they have the same embedding divisor class. 
\end{corollary}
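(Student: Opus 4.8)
The plan is to deduce the corollary directly from the preceding theorem, whose hypotheses match ours: for projectively normal models, "same degree" forces $d = r+1$ to agree, so both models $E_1, E_2$ sit in a common $\PP^r$, and by assumption they share an embedding divisor class. First I would invoke the theorem to produce a linear transformation of $\PP^r$, given by an invertible matrix $A = (a_{ij})$, inducing an isomorphism of elliptic curves $\iota \colon E_1 \to E_2$. Because $\iota$ is an isomorphism in the category of elliptic curves, it respects base points and hence the group laws; concretely, writing $\mu_i$ for the addition morphism of $E_i$ and $[n]_i$ for multiplication-by-$n$, we have $\mu_1 = \iota^{-1} \circ \mu_2 \circ (\iota \times \iota)$ and $[n]_1 = \iota^{-1} \circ [n]_2 \circ \iota$, and similarly for any isogeny.

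The second step is to unwind what "linear" costs in the complexity model. Under $\iota$ the coordinate functions $X_0, \dots, X_r$ of $E_2$ pull back to the linear forms $\sum_j a_{ij} X_j$ in the coordinates of $E_1$, while $\iota^{-1}$ is given by the inverse matrix $A^{-1}$. Evaluating such a linear substitution requires only additions and multiplications by the fixed field constants $a_{ij}$ (respectively the entries of $A^{-1}$): no product of two variable quantities and no variable square is incurred. These are exactly the operations that the phrase "up to additions and multiplication by fixed constants" declares free, so in the usual $\Mul/\Sqr$ accounting they contribute nothing to the count.

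Finally I would assemble these observations. Given any polynomial description of an arithmetic operation on $E_2$ — the addition law of bidegree $(m,n)$, a doubling or tripling map, or more generally an isogeny — the corresponding operation on $E_1$ is obtained by pre-composing the inputs with $A$ (and $A \times A$ in the two-variable case) and post-composing the output with $A^{-1}$. The genuinely nonlinear part of the computation, carried out by the $E_2$-formulas, is identical for both models, while the only additional work is the linear pre- and post-processing, which is free in this cost model. Hence the two models have equivalent arithmetic up to additions and multiplication by fixed constants.

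I do not expect a serious obstacle here; the statement is essentially a translation of the preceding theorem into the language of operation counts. The one point to handle with care is the passage from projective to affine representatives: a linear transformation of $\PP^r$ is determined only up to an overall scalar, and the maps defining the arithmetic are homogeneous, so I must check that the chosen representative $A$ is compatible with this homogeneity and introduces no hidden variable division or normalization. This is routine, but it is precisely where the "multiplication by fixed constants" clause earns its keep, since any projective rescaling of the output coordinates is absorbed into that allowance.
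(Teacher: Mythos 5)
Your proposal is correct and follows the same route the paper intends: the corollary is stated without proof as an immediate consequence of the preceding theorem, relying on the observation (made explicitly at the start of Section~\ref{sec:linear_isomorphisms}) that a linear change of variables costs only additions and multiplications by fixed constants. Your additional care about conjugating the arithmetic morphisms by the linear isomorphism and about projective rescaling is sound but does not change the argument.
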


A natural condition is to assume that $[-1]$ is also 
linear on $E$ in its embedding, for which we recall 
the notion of a symmetric model 
(cf.~Lemmas~2 and~4 of Kohel~\cite{Kohel2011} for  
the equivalence of the following conditions).

\begin{definition}
A projectively normal elliptic curve model $\iota: E \rightarrow \PP^r$
is {\it symmetric} if and only if any of the following is 
true:
\begin{enumerate}
\item
$[-1]$ is given by a projective linear transformation,
\item
$[-1]^*\cL \isom \cL$ where $\cL = \iota^*\cO_{\PP^r}(1)$,
\item
$T \in E[2]$, where $T$ is the embedding class.
\end{enumerate}
\end{definition}
\noindent

In view of the classification of the linear isomorphism class, 
this reduces the classification of projectively normal symmetric 
models of a given degree $d$ to the finite set of points $T$ in $E[2]$
(and more precisely, for models over $k$, to $T$ in $E[2](k)$). 

To complete the analysis of models up to linear equivalence, 
we finally recall a classification of linear translation maps.
Although the automorphism group of an elliptic curve is finite, 
and in particular $\Aut(E) = \{\pm 1\}$ if $j(E) \ne 0,12^3$,
there exist additional automorphisms as genus-one curves:
each point $T$ induces a translation-by-$T$ morphism $\tau_T$.
Those which act linearly on a given model have the following 
simple characterization (see Lemma~5 of Kohel~\cite{Kohel2011}). 

\begin{theorem}
\label{thm:linear-translation}
Let $E$ be a projectively normal projective degree $d$ model of an elliptic 
curve. The translation-by-$T$ morphism $\tau_T$ acts linearly if 
and only if $T$ is in $E[d]$.
\end{theorem}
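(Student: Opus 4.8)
The plan is to reduce the linearity of $\tau_T$ to a statement about the invertible sheaf $\cL = \iota^*\cO_{\PP^r}(1)$, and then to a divisor-class computation on $E$. First I would invoke the characterization of the isomorphisms in the category of pairs: since the model is projectively normal, hence given by the complete linear system $|\cL|$, the morphism $\tau_T$ is induced by a projective linear transformation of $\PP^r$ if and only if $\tau_T^*\cL \isom \cL$. In one direction, an isomorphism $\tau_T^*\cL \isom \cL$ composed with the pullback $\tau_T^*\colon \Gamma(E,\cL) \to \Gamma(E,\tau_T^*\cL)$ yields a linear automorphism of $\Gamma(E,\cL)$, and hence of $\PP^r = \PP(\Gamma(E,\cL)^*)$, realizing $\tau_T$; conversely, if a linear map $M$ satisfies $M\circ\iota = \iota\circ\tau_T$, then $\tau_T^*\cL = \iota^*M^*\cO_{\PP^r}(1) \isom \iota^*\cO_{\PP^r}(1) = \cL$. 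This reduces the theorem to proving that $\tau_T^*\cL \isom \cL$ if and only if $T \in E[d]$.

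Next I would pass to divisor classes. Writing $S$ for the embedding class (renamed to avoid collision with the translation point $T$), the sheaf $\cL$ has embedding divisor class $r(O) + (S)$, a class of degree $d = r+1$. The key tool is the summation isomorphism $\sigma\colon \mathrm{Pic}^d(E) \xrightarrow{\ \sim\ } E$ supplied by Abel's theorem, which sends a degree-$d$ class $\big[\sum_i (P_i)\big]$ to the group-law sum $\sum_i P_i$, and under which two degree-$d$ classes agree precisely when their images coincide. Under $\sigma$ the class of $\cL$ maps to $r\,O + S = S$.

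Finally I would compute the action of $\tau_T^*$ through $\sigma$. Since $\tau_T(P) = P + T$ has $\tau_T^{-1}(P) = P - T$, the pullback sends a point divisor $(P)$ to $(P - T)$, so for a degree-$d$ divisor $D = \sum_i (P_i)$ one gets $\sigma(\tau_T^* D) = \sum_i (P_i - T) = \sigma(D) - dT$. Applied to $\cL$ this gives $\sigma(\tau_T^*\cL) = S - dT$, so that $\tau_T^*\cL \isom \cL$ if and only if $S - dT = S$, that is $dT = O$, which is exactly the condition $T \in E[d]$; note that $S$ cancels, so the criterion is independent of the chosen model of degree $d$. The only step that genuinely uses the hypotheses is the first one: one must know the embedding is complete of the stated degree in order to identify linearity with fixing the class of $\cL$. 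Granting that input, the remainder is routine bookkeeping with Abel's theorem, so I expect the conceptual reduction, and not the computation, to be the crux.
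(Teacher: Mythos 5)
Your proof is correct and is essentially the argument behind the cited result (the paper itself gives no inline proof, deferring to Lemma~5 of \cite{Kohel2011}): reduce linearity to $\tau_T^*\cL \isom \cL$ via completeness of the linear system, then compute $\tau_T^*$ on the degree-$d$ divisor class using Abel's theorem to get the condition $dT = O$. Both steps are carried out correctly, including the key observation that the embedding class cancels, so nothing further is needed.
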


\noindent{\bf Remark.} The statement is geometric, in the sense 
that it is true for all $T$ in $E(\bar{k})$, but if $T$ is not 
in $E(k)$ then the linear transformation is not $k$-rational.

\section{Exact morphisms and isogenies}
\label{sec:exact_morphisms}

In order to minimize the number of arithmetic operations, 
it is important to control the degree of the defining 
polynomials for an isogeny. For an isomorphism, we gave 
conditions for the isomorphism to be linear.  In general 
we want to characterize those morphisms of degree~$n$ 
given by polynomials of degree~$n$. 

A tuple $(f_0,\dots,f_r)$ of polynomials defining a morphism 
$\phi: X \rightarrow Y$ as a rational map is defined to be 
{\it complete} if the exceptional set 
$$
\{ P \in X(\bar{k}) \;|\; f_0(P) = \cdots = f_r(P) = 0 \} 
$$
is empty. In this case a single tuple defines $\phi$ as a morphism.  
The following theorem characterizes the existence and uniqueness of 
such a tuple for a morphism of curves. 

\begin{theorem}
\label{thm:exact-isogeny-sheaves}
Let $\phi: C_1 \rightarrow C_2$ be a morphism of curves, embedded 
as projectively normal models by invertible sheaves, $\cL_1$ and 
$\cL_2$, respectively.  
The morphism $\phi$ is given by a complete tuple 
$\fs = (f_0,\dots,f_r)$ of defining polynomials of degree~$n$ 
if and only if $\phi^*\cL_2 \isom \cL_1^n$. If it exists, $\fs$ 
is unique in $k[C_1]^d$ up to a scalar multiple. 
\end{theorem}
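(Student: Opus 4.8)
The plan is to translate the statement into the standard dictionary between base-point-free linear systems and morphisms to projective space (as in Hartshorne~\cite[Chapter~II, Section~7]{Hartshorne}), since the two hypotheses---existence of a complete degree-$n$ tuple and the isomorphism $\phi^*\cL_2 \isom \cL_1^n$---are two faces of the same correspondence. Throughout I identify, using the projective normality of $C_1$ recalled in the Background, a degree-$n$ polynomial on $C_1$ with a global section of $\cL_1^n$, and I write $X_0,\dots,X_r$ for both the coordinate functions of $\PP^r$ and their images in $\Gamma(C_2,\cL_2)$, which generate $\cL_2$ and have empty common zero locus since they define the embedding $\iota_2$. I also use that $\phi^*\cL_2 = (\iota_2\circ\phi)^*\cO_{\PP^r}(1)$, valid because $\phi$ factors through $C_2 \subset \PP^r$ and $\cL_2 = \iota_2^*\cO_{\PP^r}(1)$.

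For the direction assuming a complete tuple $\fs = (f_0,\dots,f_r)$, I would first read the $f_i$ as global sections of $\cL_1^n$ with empty common zero locus; completeness is exactly the base-point-freeness of the linear system they span. The morphism these sections define is $\phi$ itself, so the linear-systems dictionary gives $(\iota_2\circ\phi)^*\cO_{\PP^r}(1) \isom \mathcal{F}$, where $\mathcal{F} \subseteq \cL_1^n$ is the subsheaf generated by the $f_i$. The point to check is that $\mathcal{F}$ is all of $\cL_1^n$: since the $f_i$ have no common zero, they generate the stalk of $\cL_1^n$ at every point, so $\mathcal{F} = \cL_1^n$ and hence $\phi^*\cL_2 \isom \cL_1^n$. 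For the converse, fixing an isomorphism $\theta\colon \phi^*\cL_2 \xrightarrow{\sim} \cL_1^n$, I would set $f_i = \theta(\phi^*X_i)$. Each $\phi^*X_i$ is a global section of $\phi^*\cL_2$, so each $f_i$ is a global section of $\cL_1^n$, i.e.\ a degree-$n$ polynomial by projective normality. Because $\theta$ is a sheaf isomorphism it preserves vanishing loci, so the common zeros of the $f_i$ coincide with the common zeros of the $\phi^*X_i$, which form $\phi^{-1}(\{X_0=\cdots=X_r=0\}) = \phi^{-1}(\emptyset) = \emptyset$; thus $\fs$ is complete and defines $\phi$ by construction.

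Finally, for uniqueness I would compare two complete degree-$n$ tuples $\fs=(f_i)$ and $\fs'=(f_i')$ defining $\phi$. Equality of the induced maps gives $f_i' f_j = f_i f_j'$ as sections of $\cL_1^{2n}$, so the rational function $g = f_i'/f_i$ is independent of $i$. At any point $P$, completeness of $\fs$ provides an index $i$ with $f_i(P)\ne 0$; in a local trivialization $f_i$ is then a nonvanishing regular function and $f_i'$ is regular, so $g$ is regular at $P$. Hence $g$ is a global regular function on the projective curve $C_1$, therefore a constant $\lambda \in k^*$, and $\fs' = \lambda\fs$.

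I expect the only real subtlety---the \emph{main obstacle}---to be the bookkeeping that identifies $\Gamma(C_1,\cL_1^n)$ with genuine degree-$n$ forms and that confirms the $f_i$ generate $\cL_1^n$ rather than a proper subsheaf, which is precisely where projective normality and completeness are used; the two implications and the uniqueness are then formal consequences of the linear-systems dictionary.
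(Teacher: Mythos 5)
Your proposal is correct and follows essentially the same route as the paper: both proofs are applications of the standard dictionary between morphisms to $\PP^r$ and globally generated invertible sheaves, with projective normality identifying degree-$n$ forms with $\Gamma(C_1,\cL_1^n)$, and both establish uniqueness by showing $g_i/f_i$ is a pole-free rational function on a projective curve, hence a constant. The only cosmetic difference is that the paper phrases the forward direction as computing $\Hom(\phi^*\cL_2,\cL_1^n)\isom\Gamma(C_1,\phi^*\cL_2^{-1}\otimes\cL_1^n)=k$, while you invoke the correspondence of Hartshorne II.7 directly; the content is the same.
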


\begin{proof}
Under the hypotheses that the $C_i$ are projectively normal 
models, we identify the spaces of polynomials of degree~$n$ 
with global sections of $\cL_1^n$.  A tuple of polynomials of 
degree~$n$ defining $\phi$ corresponds to an element of 
$$
\Hom(\phi^*\cL_2,\cL_1^n) \isom
    \Gamma(C_1,\phi^*\cL_2^{-1} \otimes \cL_1^n ).
$$
Being complete implies that $\fs$ is a generator for 
all such tuples of degree~$n$ defining polynomials for 
$\phi$, as a $k = \Gamma(C_1,\cO_{C_1})$ vector space. 
Explicitly, let $(g_0,\dots,g_r)$ be another tuple, and 
set 
$
c = g_0/f_0 = \cdots = g_r/f_r \in k(C_1).
$
Since the $f_j$ have no common zero, $c$ has no poles  
and thus lies in $k$. Consequently 
$$
k = \Gamma(C_1,\phi^*\cL_2^{-1} \otimes \cL_1^n), 
\mbox{ and hence } \phi^*\cL_2 \isom \cL_1^n.  
$$
Conversely, if the latter isomorphism holds, $\Hom(\phi^*\cL_2,\cL_1^n) \isom k$,
and a generator $\fs$ for $\Hom(\phi^*\cL_2,\cL_1^n)$ is also 
a generator of the spaces of defining polynomials of all degrees:
$$
\Hom(\phi^*\cL_2,\cL_1^{n+m}) = k \fs \otimes_k \Gamma(C_1,\cL_1^m).
$$
Since $\phi$ is a morphism, $\fs$ has no base point, hence is complete.
\end{proof}

We say that a morphism between projectively normal models of curves 
is {\it exact} if it satisfies the condition $\phi^*\cL_2 \isom \cL_1^n$ 
for some $n$. If $\phi$ is exact, then $n$ is uniquely determined by 
$$
\deg(\phi)\deg(\cL_2) =
\deg(\phi^*\cL_2) = 
\deg(\cL_1^n) = n\deg(\cL_1),
$$
and, in particular $n = \deg(\phi)$ if $C_1$ and $C_2$ 
are models of the same degree. 

\begin{corollary}
\label{cor:exact-isogeny-divisors}
Let $E_1$ and $E_2$ be projecively normal models of elliptic curves 
of the same degree~$d$ with embedding classes $T_1$ and $T_2$.  
An isogeny $\phi: E_1 \rightarrow E_2$ of degree~$n$ and 
kernel $G$ is exact if and only if 
$$
n(T_1 - S_1) = d \sum_{Q \in G} Q
\mbox{ where } S_1 \in \phi^{-1}(T_2).
$$
\end{corollary}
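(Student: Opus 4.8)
The plan is to reduce the sheaf-theoretic criterion of Theorem~\ref{thm:exact-isogeny-sheaves} to a computation in the Picard group of $E_1$. Since $E_1$ and $E_2$ have the same degree~$d$, the discussion following that theorem already identifies the relevant exponent as $n = \deg(\phi)$, so exactness is equivalent to the single isomorphism $\phi^*\cL_2 \isom \cL_1^n$. The whole content of the corollary is therefore to translate this isomorphism of invertible sheaves on $E_1$ into the stated relation among points of $E_1$.

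First I would fix the standard dictionary between $\mathrm{Pic}(E)$ and $\ZZ \times E$: a divisor $D = \sum_P n_P (P)$ of degree~$e$ satisfies $D \sim (e-1)(\oO) + (s(D))$, where $s(D) = \sum_P n_P\, P$ is its group-law sum; consequently two divisors are linearly equivalent if and only if they have the same degree and the same sum. By the definition of the embedding divisor class, $\cL_i$ is represented by $(d-1)(\oO_i) + (T_i)$, a divisor of degree~$d$ and sum~$T_i$. Hence $\cL_1^n$ is represented by $n(d-1)(\oO_1) + n(T_1)$, of degree $nd$ and sum $n T_1$.

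Next I would compute the degree and sum of $\phi^*\cL_2$ by pulling back the representative divisor $(d-1)(\oO_2) + (T_2)$. Using that the fibres of a (separable) isogeny are the cosets of its kernel, $\phi^*(\oO_2) = \sum_{Q \in G} (Q)$ and $\phi^*(T_2) = \sum_{Q \in G} (S_1 + Q)$ for a chosen $S_1 \in \phi^{-1}(T_2)$; both have degree~$n$, with sums $\sum_{Q \in G} Q$ and $n S_1 + \sum_{Q \in G} Q$ respectively. Adding the contributions, $\phi^*\cL_2$ has degree $(d-1)n + n = nd$---matching $\cL_1^n$, as the degree computation preceding the corollary guarantees---and sum
$$
(d-1)\sum_{Q \in G} Q + \Big(n S_1 + \sum_{Q \in G} Q\Big) = d\sum_{Q \in G} Q + n S_1.
$$
Since the two sheaves already agree in degree, the dictionary gives $\phi^*\cL_2 \isom \cL_1^n$ exactly when their sums coincide, i.e. $n T_1 = d\sum_{Q \in G} Q + n S_1$, which is the asserted relation $n(T_1 - S_1) = d\sum_{Q \in G} Q$.

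The step I expect to require the most care is the computation of the sum of $\phi^*\cL_2$, in two respects. First, I would check that the relation is independent of the choice of $S_1 \in \phi^{-1}(T_2)$: any two preimages differ by an element $R$ of $G$, and $nR = \oO$ since $G$ has order~$n$, so the term $n S_1$ is well defined. Second, one must account for inseparable $\phi$, where the fibres are no longer reduced and the naive point count fails. The cleanest way to handle the latter---and to confirm the separable computation---is to observe that $s\big(\phi^*((T_2)-(\oO_2))\big) = \hat\phi(T_2)$ under the canonical identification of $\phi^*$ on degree-zero classes with the dual isogeny, together with $\hat\phi(T_2) = \hat\phi(\phi(S_1)) = [n]S_1$ since $\hat\phi\circ\phi = [n]$; this recovers the term $n S_1$ without appealing to reducedness of the fibre, while $\sum_{Q \in G} Q$ is read off from the divisor $\phi^*(\oO_2)$ with its scheme-theoretic multiplicities.
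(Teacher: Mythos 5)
Your proof is correct and follows essentially the same route as the paper's: both translate the exactness criterion $\phi^*\cL_2 \isom \cL_1^n$ into the linear equivalence of divisors $nD_1 \sim \phi^*D_2$ and then compare their group-law sums on $E_1$. The paper leaves the evaluation of those sums implicit, whereas you carry the computation out explicitly and also address the well-definedness of $S_1$ and the inseparable case --- worthwhile additions, but not a different argument.
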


\begin{proof}
This statement expresses the sheaf isomorphism $\cL_1^n \isom \phi^*\cL_2$ 
in terms of equivalence of divisors:
$$
n((d-1)(O_1) + (T_1)) = n D_1 \sim \phi^* D_2 = \phi^*((d-1)(O_2) + (T_2)).
$$
This equivalence holds if and only if the evaluation of the divisors on 
the curve are equal, from which the result follows.
\end{proof}

\begin{corollary}
The multiplication-by-$n$ map on any symmetric projectively normal model is exact. 
\end{corollary}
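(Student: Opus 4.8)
The plan is to invoke Corollary~\ref{cor:exact-isogeny-divisors} directly, taking $E_1 = E_2 = E$ and $\cL_1 = \cL_2 = \cL$, where $\cL$ is the symmetric invertible sheaf defining the model, and $\phi = [n]$ the multiplication-by-$n$ map. Since $[n]$ has degree $n^2$, I would first rephrase the criterion of the corollary (whose displayed $n$ denotes the \emph{isogeny degree}) with $N = n^2$, so that exactness reads $N(T - S) = d\sum_{Q\in G} Q$. Here $T_1 = T_2 = T$ is the common embedding class, $G = \ker[n] = E[n]$ is the kernel, and $S \in E(\bar{k})$ is any point with $[n](S) = T$, that is $nS = T$ in the group law.

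Next I would evaluate the two sides separately in $E(\bar{k})$. For the left-hand side, using $nS = T$ gives $n^2 S = nT$, whence
$$
N(T - S) = n^2 T - n^2 S = n^2 T - nT = n(n-1)\,T.
$$
Because the model is symmetric, condition~(3) of the definition gives $T \in E[2]$, so $2T = \oO$; as $n(n-1)$ is a product of consecutive integers and hence even, the left-hand side collapses to $\oO$.

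For the right-hand side I would show that $\sum_{Q\in E[n]} Q = \oO$, so that $d\sum_{Q\in G} Q = \oO$ and the two sides agree. The key observation is that $G$ is stable under $[-1]$, so the involution $Q \mapsto -Q$ permutes the kernel; pairing each $Q$ with $-Q$ contributes $Q + (-Q) = \oO$, leaving only the fixed points $Q = -Q$, namely the $2$-torsion lying in $G$, that is $E[\gcd(2,n)]$. When $n$ is odd this is $\{\oO\}$, and when $n$ is even it is all of $E[2]$, whose points sum to $\oO$; in either case the total is $\oO$. With both sides equal to $\oO$, Corollary~\ref{cor:exact-isogeny-divisors} yields $[n]^*\cL \isom \cL^{N}$, which is exactness.

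I expect the main obstacle to be the careful treatment of the right-hand side when $\mathrm{char}(k)$ divides $n$, where $[n]$ is inseparable and $G$ must be read as a subscheme, so that $\sum_{Q\in G} Q$ is taken with multiplicities. The involution argument above is robust under this reading, since the $[-1]$-action preserves the local length at each point, so the cancellation of $Q$ against $-Q$ persists and the reduction to the $2$-torsion contribution still forces the sum to be $\oO$. As a sanity check, the conclusion $[n]^*\cL \isom \cL^{n^2}$ matches the theorem of the cube, which gives $[n]^*\cL \isom \cL^{(n^2+n)/2}\otimes([-1]^*\cL)^{(n^2-n)/2}$ and simplifies to $\cL^{n^2}$ precisely when $[-1]^*\cL \isom \cL$, that is, for a symmetric model.
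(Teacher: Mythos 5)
Your proposal is correct and takes essentially the same route as the paper's own proof: apply Corollary~\ref{cor:exact-isogeny-divisors} with $E_1 = E_2 = E$, compute $n^2(T-S) = n(n-1)T = \oO$ using $nS = T$ and $T \in E[2]$, and observe that the points of $E[n]$ sum to $\oO$. The only difference is that you spell out the involution argument for why $\sum_{Q \in E[n]} Q = \oO$ (and add the theorem-of-the-cube sanity check), whereas the paper asserts this sum vanishes without further comment.
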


\begin{proof}
In the case of a symmetric model we take $E = E_1 = E_2$
in the previous corollary. The embedding divisor class 
$T = T_1 = T_2$ is in $E[2]$, and $S \in [n]^{-1}(T)$ 
satisfies $nS = T$, so   
$$
\deg([n])(T-S) = n^2(T - S) = n(nT - T) = n(n-1)T = \oO.
$$
On the other hand, the sum over the points of $E[n]$ is 
$\oO$, hence the result.
\end{proof}

This contrasts with the curious fact that 2-isogenies 
are not well-suited to elliptic curves in Weierstrass 
form.

\begin{corollary}
There does not exist an exact cyclic isogeny of even degree~$n$ 
between curves in Weierstrass form.
\end{corollary}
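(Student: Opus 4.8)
The plan is to specialize Corollary~\ref{cor:exact-isogeny-divisors} to Weierstrass models, which are exactly the degree-$3$ plane models, so here $d = 3$. The first step is to pin down the embedding class: since any line meets a Weierstrass curve in three points whose group-law sum is the base point $O$, the embedding class is $T = O$ for both $E_1$ and $E_2$, and I may take $T_1 = T_2 = O$. The second ingredient is that $\phi^{-1}(T_2) = \phi^{-1}(O)$ is precisely the kernel $G = \ker\phi$, which for a cyclic isogeny of degree $n$ is cyclic of order $n$.

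With these identifications the exactness criterion $n(T_1 - S_1) = d\sum_{Q \in G} Q$ simplifies considerably. Any admissible $S_1$ lies in $G$, so $nS_1 = O$; in fact the left-hand side is independent of the choice of $S_1$ within the coset $\phi^{-1}(T_2)$, since it changes by $n$ times an element of $G$. Hence the left-hand side vanishes, and writing $\Sigma = \sum_{Q \in G} Q$ the condition for exactness becomes simply
$$
O = d\,\Sigma = 3\,\Sigma.
$$
To reach a contradiction I would assume that an exact cyclic isogeny of even degree exists and then evaluate $\Sigma$ explicitly.

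The crux is the computation of the group sum over a cyclic group of even order. Writing $G = \langle g \rangle$ with $g$ of order $n$, one has $\Sigma = \bigl(\sum_{k=0}^{n-1} k\bigr)g = \tfrac{n(n-1)}{2}\,g$; reducing the integer coefficient modulo $n$ and using that $n$ is even shows $\Sigma = \tfrac{n}{2}\,g$, the unique point of order $2$ in $G$. In particular $\Sigma \neq O$ and $2\Sigma = O$, so $3\Sigma = \Sigma \neq O$, contradicting the exactness condition $O = 3\Sigma$. This completes the argument. The main, and indeed only nontrivial, obstacle is this parity computation: the same calculation for odd $n$ gives $\Sigma = O$, so it is precisely the even order of the kernel that produces the obstruction, and the cyclicity hypothesis is essential, since for a non-cyclic group the corresponding sum can again vanish.
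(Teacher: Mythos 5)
Your proposal is correct and follows essentially the same route as the paper: specialize Corollary~\ref{cor:exact-isogeny-divisors} with $d=3$, $T_1=T_2=O$, note the left-hand side vanishes, and observe that the sum over a cyclic group of even order is the nontrivial $2$-torsion point of the kernel, so $3\Sigma=\Sigma\neq O$. Your write-up merely makes explicit two details the paper leaves implicit (the computation $\Sigma=\tfrac{n}{2}g$ and the independence of the choice of $S_1$), which is fine.
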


\begin{proof}
For a cyclic subgroup $G$ of even order, the sum over 
its points is a nontrivial $2$-torsion point $Q$. 
For Weierstrass models we have $T_1 = \oO_1$ and 
$T_2 = \oO_2$, and may choose $S_1 = \oO_1$, so that 
(for $d = 3$)
$
n(T_1 - S_1) = \oO \ne 3Q = Q,
$
so we never have equality.
\end{proof}

\noindent{\bf Example.}
Let $E: Y^2Z = X(X^2 + aXZ + bZ^2)$ be an elliptic curve 
with rational $2$-torsion point $(0:0:1)$. The quotient 
by $G = \langle(0:0:1)\rangle$,
to the curve $Y^2Z = X((X - aZ)^2 - 4bZ^2)$, 
is given by a 3-dimensional space of polynomial maps of degree~3:
$$
(X:Y:Z) \longmapsto 
\left\{
\begin{array}{@{\,}l}
( Y^2Z : (X^2 - bZ^2)Y : X^2Z)\\
( (X + aZ)Y^2 : (Y^2 - 2bXZ - abZ^2)Y : X^2(X + aZ) )\\
( (X^2 + aXZ + bZ^2)Y : X Y^2 - b(X^2 + aXZ + bZ^2)Z : XYZ )
\end{array}
\right.
$$
but not by any system of polynomials of degree~$2$. 

\begin{corollary}
Let $\phi: E_1 \rightarrow E_2$ be an isogeny of even 
degree~$n$ of symmetric models of elliptic curves of 
the same even degree~$d$, and let $T_1$ and $T_2$ be the 
respective embedding classes. Then $\phi$ is exact
if and only if $T_2 \in \phi(E_1[n])$. 
\end{corollary}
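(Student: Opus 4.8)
The plan is to start from the divisor criterion of Corollary~\ref{cor:exact-isogeny-divisors}, which characterizes exactness of $\phi$ by the condition $n(T_1 - S_1) = d\sum_{Q \in G} Q$ with $S_1 \in \phi^{-1}(T_2)$, and then to use the two parity hypotheses---that both $d$ and $n$ are even---to annihilate the torsion terms appearing on each side. The outcome should be a single membership condition on $S_1$, which I then translate into the intrinsic condition $T_2 \in \phi(E_1[n])$.

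First I would record two arithmetic observations. Since the model of $E_1$ is symmetric, its embedding class $T_1$ lies in $E_1[2]$, so $2T_1 = \oO$ and, $n$ being even, $nT_1 = \oO$. Second, writing $\sigma_G = \sum_{Q \in G} Q$ for the sum over the kernel, the involution $Q \mapsto -Q$ permutes $G$ and hence fixes the sum up to sign, giving $2\sigma_G = \oO$; as $d$ is even this yields $d\sigma_G = \oO$. Substituting both into the criterion makes the right-hand side vanish and leaves $n(T_1 - S_1) = -nS_1$, so exactness is equivalent to the single condition $nS_1 = \oO$, that is, $S_1 \in E_1[n]$.

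It remains to identify $nS_1 = \oO$ with $T_2 \in \phi(E_1[n])$, and I expect this to be the delicate step, since the criterion singles out one preimage $S_1$ whereas the conclusion concerns the entire image $\phi(E_1[n])$. The bridge is that the fiber $\phi^{-1}(T_2)$ equals the coset $S_1 + G$, and because $G \subseteq E_1[n]$ every $g \in G$ satisfies $ng = \oO$; therefore $n(S_1 + g) = nS_1$ independently of the representative. Hence the fiber meets $E_1[n]$ precisely when $nS_1 = \oO$: if $nS_1 = \oO$ then $T_2 = \phi(S_1) \in \phi(E_1[n])$, while conversely any $P \in E_1[n]$ with $\phi(P) = T_2$ lies in $S_1 + G$ and forces $nS_1 = nP = \oO$. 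Chaining these equivalences gives the result; the only point demanding care is this coset argument, which ensures the image condition is insensitive to the choice of $S_1$.
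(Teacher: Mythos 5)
Your proposal is correct and follows the paper's own route: both invoke Corollary~\ref{cor:exact-isogeny-divisors}, use symmetry plus $2\mid n$ to kill $nT_1$ and $2\mid d$ to kill $d\sum_{Q\in G}Q$, and reduce exactness to $nS_1=\oO$. The only (immaterial) difference is the final identification with $T_2\in\phi(E_1[n])$: the paper writes $nS_1=\hat\phi\phi(S_1)=\hat\phi(T_2)$ and reads off membership in $\ker\hat\phi=\phi(E_1[n])$, whereas you argue directly that the fiber $\phi^{-1}(T_2)=S_1+G$ with $G\subseteq E_1[n]$ makes $nS_1$ independent of the chosen preimage --- a slightly more explicit justification of the same equivalence.
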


\begin{proof}
This is a consequence of Corollary~\ref{cor:exact-isogeny-divisors}.
Since $n$ is even and $E_1$ symmetric, $nT_1 = O_1$, and 
since $d$ is even, 
$$
d \sum_{Q \in G} Q = O_1.
$$
This conclusion follows since $nS_1 = \hat\phi\phi(S_1) 
= \hat\phi(T_2)$, which equals $\oO_1$ if and only $T_2$ 
is in $\phi(E_1[n])$. 
\end{proof}

\section{Other models for elliptic curves}
\label{sec:alternative_models}

Alternative models have been proposed for efficient arithmetic 
on elliptic curves.  Since the classification of models up to 
isomorphism is more natural for projective embeddings, providing 
a reduction to linear algebra, we describe how to interpret other 
models in terms of a standard projective embedding. 
\vspace{1mm}

\noindent{\bf Affine models.} 
An affine plane model in $\AA^2$ provides a convenient means 
of specifying (an open neighborhood of) an elliptic curve.
A direct description of arithmetic in terms of the affine 
model requires inversions, interpolation of points, and 
special conventions for representations of points at infinity,
which we seek to avoid.

Affine models of degree~$3$ extend naturally to an embedding 
in the projective closure $\PP^2$ of $\AA^2$.  
When the degree of the model is greater than three, the standard 
projective closure is singular.  However, in general there 
exists a well-defined divisor at infinity of degree~$d\,(= r + 1)$, 
which uniquely determines a Riemann--Roch space and associated 
embedding in $\PP^r$, up to linear isomorphism.

\vspace{1mm}

\noindent{\bf Product space $\PP^1 \times \PP^1$.} 
Elliptic curves models in $\PP^1 \times \PP^1$ arise 
naturally by equipping an elliptic curve $E$ with 
two independent maps to $\PP^1$. The product projective 
space $\PP^1 \times \PP^1$ embeds via the Segre embedding 
as the hypersurface $X_0 X_3 = X_1 X_2$ in $\PP^3$. 
This construction is particularly natural when the 
maps to $\PP^1$ are given by inequivalent divisors 
$D_1$ and $D_2$ of degree two (such that the coordinate 
function are identified with the Riemann--Roch basis),
in which case the Segre embedding of $\PP^1 \times \PP^1$ 
in $\PP^3$ induces an embedding by the Riemann--Roch 
space of $D = D_1 + D_2$.  In order for both $D_1$ 
and $D_2$ to be symmetric (so that $[-1]$ stabilizes
each of the projections to $\PP^1$), each must be 
of the form $D_i = (O) + (T_i)$ for points 
$T_i \in E[2]$.  Moreover, for the $D_i$ to be 
independent, $T_1 \ne T_2$, which implies that 
$D$ is not equivalent to~$4(O)$. 
\vspace{1mm}

\noindent{\bf Weighted projective spaces.}
Various embeddings of elliptic curves in weighted projective 
spaces appear in the computational and cryptographic literature 
for optimization of arithmetic on elliptic curves (particularly 
of isogenies). We detail a few of the standard models below, 
and their transformation to projective models of degree~$3$ 
or~$4$.  

\vspace{1mm}

\noindent{$\bullet~~\PP^2_{2,3,1}$.} An elliptic curve in this 
weighted projective space is referred to as being in Jacobian 
coordinates~\cite{HEHCC-AEC}, taking the Weierstrass form
$$
Y(Y + a_1XZ + a_3Z^3) = X^3 + a_2 X^2 Z^2 + a_2 X Z^4 + a_6 Z^6.
$$ 
The space encodes the order of the polar divisor of the functions 
$x$ and $y$ of a Weierstrass model.  An elliptic curve in this coordinate system embeds 
as a Weierstrass model in the ordinary projective plane $\PP^2$ by the map
by $(X:Y:Z) \mapsto (XZ:Y:Z^3)$ with birational inverse 
$(X:Y:Z) \mapsto (XZ:YZ^2:Z)$ 
defined outside of $(0:1:0)$ (whose image is $(1:1:0)$).

This weighted projective space gives interesting algorithmic efficiencies, 
since an isogeny can be expressed in the form
$$
P \longmapsto (\phi(P):\omega(P):\psi(P)) = 
  \left(\frac{\phi(P)}{\psi(P)^2}:\frac{\omega(P)}{\psi(P)^3}:1\right)\cdot
$$
Unfortunately, addition doesn't preserve the poles, so mixing 
isogenies (e.g.~doublings and triplings) with addition, one loses 
the advantages of the special form. 

\vspace{1mm}

\noindent{$\bullet~~\PP^2_{1,2,1}$.}
An elliptic curve in this weighted projective space is referred 
to as being in López--Dahab coordinates~\cite{HEHCC-AEC}. 
This provides an artifice for deflating a model in $\PP^3$ to 
the surface $\PP^2_{1,2,1}$.  
It embeds as the surface $X_0 X_2 = X_1^2$ in $\PP^3$ by 
$$
(X:Y:Z) \mapsto (X^2:XZ:Z^2:Y),
$$
with inverse 
$$
(X_0:X_1:X_2:X_3) \longmapsto 
\left\{
\begin{array}{@{\,}c}
(X_1:X_2 X_3:X_2),\\
(X_0:X_0 X_3:X_1).
\end{array}
\right.
$$
\vspace{1mm}

\noindent{$\bullet~~\PP^3_{1,2,1,2}$.}
An elliptic curve in this weighted projective space is commonly referred 
to as being in extended López--Dahab coordinates. Denoting the coordinates 
$(X:Y:Z:W)$, an elliptic curve is usually embedded in the surface $S: 
W = Z^2$ (variants have $W = XZ$ or extend further to include $XZ$ and $Z^2$).  
As above, the space $S$ is birationally equivalent to $\PP^3$:
$$
(X:Y:Z:W) \mapsto (X^2:XZ:Z^2:Y) = (X^2:XZ:W:Y).
$$
For isogenies (e.g.~doubling and tripling), by replacing a final squaring 
with an inital squaring, one can revert to $\PP^2_{1,2,1}$. 

\section{Efficient arithmetic}
\label{sec:efficient_arithmetic}

We first recall some notions of complexity, which we 
use to describe the cost of evaluating the arithmetic 
on elliptic curves. 
The notation $\Mul$ and $\Sqr$ denote the cost of 
a field multiplication and squaring, respectively. 
For a finite field of $q$ elements, typical algorithms 
for multiplication take time $c_M \log(q)^\omega$ 
for some $1 + \varepsilon \le \omega \le 2$, with a possibly 
better constant for squaring (or in characteristic 2 where 
squarings can reduce to the class $O(\log(q))$). 
The upper bound of $2$ arises by a naive implementation, 
while a standard Karatsuba algorithm gives $\omega = \log_2(3)$, 
and fast Fourier transform gives an asymptotic complexity of 
$1 + \varepsilon$.  We ignore additions, which lie in the class 
$O(\log(q))$, and distiguish multiplication by a constant (of 
fixed small size or sparse), using the notation $\mul$ for its 
complexity.

The principle focus for efficient arithmetic is the operation 
of scalar multiplication by $k$.  Using a windowing computation,
we write $k = \sum_{i=0}^t a_i n^i$ in base $n = \ell^k$
(the window), and precompute $[a_i](P)$ for $a_i$ in a set 
of coset representatives for $\ZZ/n\ZZ$.  A {\it sliding} 
window lets us restrict representatives for $a_i$ to $(\ZZ/n\ZZ)^*$.
We may then compute $[k](P)$, using at most $t$~additions 
for $kt$ scalings by $[\ell]$.

In order to break down the problem further, we suppose the 
existence of an isogeny decomposition $[\ell] = \hat\phi\phi$,
for which we need a rational cyclic subgroup $G \subset E[n]$
(where in practice $n = \ell = 3$ or $n = \ell^2 = 4$ --- 
the window may be a higher power of $\ell$). 
For this purpose we study families of elliptic curves with 
$G$-level structure.  
In view of the analysis of torsion action and degrees of defining 
polynomials, we give preference to degree-$d$ models where 
$n$ divides $d$, and $G$ will be either $\ZZ/n\ZZ$ or $\bbmu_n$ 
as a group scheme.   

We now describe the strategy for efficient isogeny computation.
Given $E_1$ and $E_2$ in $\PP^r$ with isogeny 
$\phi: E_1 \rightarrow E_2$ given by defining polynomials 
$(f_0,\dots,f_r)$ of degree~$n = \deg(\phi)$, we set 
$
V_0 = \Gamma(\PP^r,\cI_E(n)) 
    = \ker\big(\Gamma(\PP^r,\cO(n)) \rightarrow \Gamma(E_1,\cL^n)\big),
$
and successively construct a flag 
$$
V_0 \subset V_1 \subset \cdots \subset V_d = V_0 + \langle f_0,\dots,f_r \rangle
$$
such that each space $V_{i+1}$ is constructed by adjoining to $V_i$ 
a new form $g_i$ in $V_d \backslash V_i$, 
whose evaluation minimizes the number of $\Mul$ and $\Sqr$.
Subsequently the forms $f_0,\dots,f_r$ can be expressed in terms 
of the generators $g_0,\dots,g_r$ with complexity $O(\mul)$. 

In Sections~\ref{sec:cubic_models} and~\ref{sec:level2_models}
we analyze the arithmetic of tripling and doubling, on a family 
of degree~$3$ with a rational $3$-torsion point and a family of 
degree~$4$ with rational $2$-torsion point, respectively, such 
that the translation maps are linear. 
Let $G$ be the subgroup generated by this point. Using the $G$-module 
structure, and an associated norm map, we construct explicit generators 
$g_i$ for the flag decompositions.  
In Sections~\ref{sec:tripling_comparison} and~\ref{sec:doubling_comparison}
we compare the resulting algorithms of 
Sections~\ref{sec:cubic_models} and~\ref{sec:level2_models} to previous work.

\subsection{Arithmetic on cubic models}
\label{sec:cubic_models}

For optimization of arithmetic on a cubic family we consider a 
univeral curve with $\bbmu_3$ level structure, the 
{\it twisted Hessian normal form}:
$$
H : a X^3 + Y^3 + Z^3 = X Y Z,\ \oO = (0:1:-1), 
$$
obtained by descent of the Hessian model $X^3 + Y^3 + Z^3 = c X Y Z$ 
to $a = c^3$, by coordinate scaling (see~\cite{BernsteinKohelLange}).  
Addition on this model is reasonably efficient at a cost of $12\Mul$.
In order to optimize the tripling morphism $[3]$, we consider the 
quotient by $\bbmu_3 = \langle(0:\omega:-1) \rangle$.
\vspace{1mm}

By means of the isogeny $(X:Y:Z) \mapsto (aX^3:Y^3:Z^3)$, 
with kernel $\bbmu_3$, we obtain the quotient elliptic curve
$$
E : X Y Z = a (X + Y + Z)^3,\ \oO = (0:1:-1).
$$
This yields an isogeny $\phi$ of cubic models by construction, 
at a cost of three cubings: $3\Mul + 3\Sqr$.

In the previous construction, by using the $\bbmu_n$ structure, with 
respect to which the coordinate functions are diagonalized, we were 
able to construct the quotient isogeny 
$$
(X_0:\dots:X_r) \mapsto (X_0^n:\dots:X_r^n)
$$ 
without much effort.  It remains to construct the dual.  
\vspace{2mm}

In the case of the twisted Hessian, the dual isogeny $\psi = \hat\phi$ 
is given by $(X:Y:Z) \mapsto (f_0:f_1:f_2)$, where
$$
\begin{array}{l}
f_0 = X^3 + Y^3 + Z^3 - 3 X Y Z,\\
f_1 = X^2 Y + Y^2 Z + X Z^2 - 3 X Y Z,\\
f_2 = X Y^2 + Y Z^2 + X^2 Z - 3 X Y Z
\end{array}
$$
as we can compute by pushing $[3]$ through $\phi$.  

The quotient curve $E : X Y Z = a (X + Y + Z)^3$, admits 
a $\ZZ/3\ZZ$-level structure, acting by cyclic coordinate 
permutation.  The isogeny $\psi: E \rightarrow H$ is the 
quotient of this group $G = \ker(\psi)$ must be defined by 
polynomials in 
$$
\Gamma(E,\cL_E^3)^G = 
\big\langle
  X^3 + Y^3 + Z^3, X^2Y + Y^2Z + XZ^2, XY^2 + YZ^2 + X^2Z
\big\rangle
$$
modulo the relation $XYZ = a(X + Y + Z)^3$.
We note, however, that the map
$
\psi^*: \Gamma(H,\cL_H) \rightarrow \Gamma(E,\cL_E^3)^G
$
must be surjective since both have dimension $3$. 
\vspace{1mm}

Using the group action, we construct the norm map
$$
N_G : \Gamma(E,\cL) \rightarrow \Gamma(E,\cL_E^3)^G,
$$ 
by $N_G(f) = f(X,Y,Z)f(Y,Z,X)f(Z,X,Y)$. It is nonlinear 
but sufficient to provide a set of generators using 
$2\Mul$ each, and by fixing a generator of the fixed 
subspace of $G$, we construct a distinguished generator 
$g_0$ requiring $1\Mul + 1\Sqr$ for cubing. 

For the first norm we set $g_0 = N_G(X+Y+Z) = (X+Y+Z)^3$, 
noting that $N_G(X) = N_G(Y) + N_G(Z) = X Y Z = a g_0$.  
We complete a basis with forms $g_1$ and $g_2$ given by 
$$
\begin{array}{r@{\;}c@{\;}l}
g_1 = N_G(Y+Z) & = & (Y+Z)(X+Z)(X+Y), \\
g_2 = N_G(Y-Z) & = & (Y-Z)(Z-X)(X-Y),
\end{array}
$$
then solve for the linear transformation to the basis 
$\{f_0,f_1,f_2\}$:
$$
\begin{array}{r@{\;}c@{\;}l}
f_0 & = & (1-3a)g_0 - 3g_1,\\
f_1 & = & -4ag_0 + (g_1-g_2)/2,\\
f_2 & = & -4ag_0 + (g_1+g_2)/2.
\end{array}
$$
This gives an algorithm for $\psi$ using $5\Mul + 1\Sqr$, for 
a total tripling complexity of $8\Mul + 4\Sqr$ using the 
decomposition $[3] = \psi\circ\phi$. 
Attributing $1\mul$ for the multiplications by~$a$, ignoring 
additions implicit in the small integers (after scaling by $2$), 
this gives $8\Mul + 4\Sqr + 2\mul$. 

\subsection{Comparison with previous work}
\label{sec:tripling_comparison}

A naive analysis, and the previously best known algorithmm 
for tripling, required $8\Mul + 6\Sqr + 1\mul$.  To compare 
with scalar multiplication using doubling and a binary chain, 
one scales by $\log_3(2)$ to account for the reduced length 
of the addition chain. 

For comparison, the best known doublings algorithms on ordinary 
projective models (in characteristic other than~2) are:
\begin{itemize}
\item
Extended Edwards models in $\PP^3$, using $4\Mul + 4\Sqr$.
(Hisil et al.~\cite{Hisil-EdwardsRevisited})
\item
Singular Edwards models in $\PP^2$, using $3\Mul + 4\Sqr$ 
(Berstein et al.~\cite{BernsteinEtAl-TwistedEdwards})
\item
Jacobi quartic models in $\PP^3$, using $2\Mul + 5\Sqr$.
(Hisil et al.~\cite{Hisil-EdwardsRevisited})
\end{itemize}
We note that the Jacobi quartic models are embeddings in $\PP^3$ 
of the affine curve $y^2 = x^4 + 2a x^2 + 1$ extended to a 
projective curve in the $(1,2,1)$-weighted projective plane. 
The embedding $(x,y) \mapsto (x^2:y:1:x) = (X_0:X_1:X_2:X_3)$ 
gives
$$
X_1^2 = X_0^2 + 2a X_0 X_2 + X_2^2,\ X_0 X_2 = X_3^2, 
$$
in ordinary projective space $\PP^3$. There also exist models 
in weighted projective space with complexity $2\Mul + 5\Sqr$ on 
the 2-isogeny oriented curves~\cite{DIK-2006} with improvements 
of Bernstein and Lange~\cite{BernsteinLange-Edwards}, and a 
tripling algorithm with complexity of $6\Mul + 6\Sqr$ for 
3-isogeny oriented curves~\cite{DIK-2006}.
Each of these comes with a significantly higher cost for addition
(see~\cite{DIK-2006}, the EFD~\cite{EFD}, and the table below 
for more details).

The relative comparison of complexities of $[\ell]$ and addition 
$\oplus$ on twisted Hessians ($[\ell] = [3]$) and on twisted 
Edwards models and Jacobi quartics ($[\ell] = [2]$) yields the 
following:
$$
\begin{array}{c@{}lccc|l}
   & & \multicolumn{3}{c}{\mbox{ Cost of $1\Sqr$ }}\\
\;[\ell]\;     & & 1.00\Mul & 0.80\Mul & 0.66\Mul  & \mbox{\hspace{1mm}}\oplus \\ \hline
4\Mul + 4\Sqr  & & 8.00\Mul & 7.20\Mul & 6.66\Mul & 9\Mul \\
(8\Mul + 4\Sqr)&
     \log_3(2)   & 7.57\Mul & 7.07\Mul & 6.73\Mul & 12\Mul\log_3(2) = 7.57\Mul\\
(6\Mul + 6\Sqr)&
     \log_3(2)   & 7.57\Mul & 6.81\Mul & 6.28\Mul & (11\Mul + 6\Sqr)\log_3(2)\\
3\Mul + 4\Sqr  & & 7.00\Mul  & 6.20\Mul & 5.66\Mul & 10\Mul + 1\Sqr\\
2\Mul + 5\Sqr  & & 7.00\Mul  & 6.00\Mul & 5.33\Mul & 7\Mul + 3\Sqr
\end{array}
$$
This analysis brings tripling on a standard projective model, 
coupled with an efficient addition algorithm, in line with 
with doubling (on optimal models for each). 
In the section which follows we improve the $2\Mul + 5\Sqr$ 
result for doubling. 

\subsection{Arithmetic on level-$2$ quartic models}
\label{sec:level2_models}

The arithmetic of quartic models provides the greatest advantages 
in terms of existence exact $2$-isogeny decompositions and symmetric 
action of $4$-torsion subgroups. 
A study of standard models with a level-$4$ structure, which provide 
the best complexity for addition to complement doubling complexities, 
will be detailed elsewhere. 
The best doubling algorithms, however, are obtained for embedding 
divisor class $4(O)$, as in the Jacobi quartic, rather than 
$3(O) + (T)$, for a $2$-torsion point $T$, as is the case for the 
Edwards model (see~\cite{Edwards} and \cite{BernsteinLange-Edwards})
or its twists, the $\ZZ/4\ZZ$-normal form or the $\bbmu_4$-normal form 
in characteristic $0$.  

In what follows we seek the best possible complexity for doubling in
a family of elliptic curves.  
In order to exploit an isogeny decomposition for doubling and linear 
action of torsion, we construct a universal family with $2$-torsion 
point and embed the family in $\PP^3$ by the divisor $4(O)$. 
We note that any of the recent profusion of models with rational 
$2$-torsion point can be transformed to this model, hence the 
complexity results obtained apply to any such family. 

\ignore{
The $2$-isogeny oriented curves of Doche, Icart, and Kohel~\cite{DIK-2006}: 
$$
y^2 = x^3 + u x^2 + 16 u x,
$$
do not cover all twists, and are not evidently suitable for 
characteristic~2.  Here we treat a more general family, and, using 
an embedding in $\PP^3$, improve on the previously best known 
doubling complexity.}

\subsubsection*{A universal level-$2$ curve.}
Over a field of characteristic different from~$2$, a general Weierstrass 
model with $2$-torsion point has the form $y^2 = x^3 + a_1 x^2 + b_1 x$.
The quotient by the subgroup $\langle (0,0)\rangle$ of order $2$ gives
$
y^2 = x^3 + a_2 x^2 + b_2 x,
$
where $a_2 = -2a_1$ and $b_2 = a_1^2 - 4b_1$, by formulas of Vélu~\cite{Velu}. 
In order to have a family with good reduction at $2$, we may express 
$(a_1,b_1)$ by the change of variables $a_1 = 4u + 1$ and $b_1 = -16v$, 
after which $y^2 = x^3 + a_1 x^2 + b_1 x$ is isomorphic to the curve
$$
E_1: y^2 + xy = x^3 + u x^2 - v x
$$
with isogenous curve 
$
E_2: y^2 + xy = x^3 + u x^2 + 4v x + (4u + 1)v.
$
A quartic model in $\PP^3$ for each of these curves is given by the 
embedding
$$
(x,y) \longmapsto (X_0,X_1,X_2,X_3) = (x^2, x, 1, y),
$$
with respect to the embedding divisor $4(O)$. This gives the quartic 
curve in $\PP^2$ given by
$$
Q_1: X_3^2 + X_1 X_3 = (X_0 + u X_1 - v X_2) X_1,\ X_1^2 = X_0 X_2,
$$
with isogenous curve
$$
Q_2 : X_3^2 + X_1 X_3 = (X_0 + 4vX_2)(X_1 + uX_2)+ v X_2^2,\ X_1^2 = X_0 X_2
$$
each having $(1:0:0:0)$ as identity. The Weierstrass model has discriminant 
$\Delta = v^2 ((4u + 1)^2 - 64v)$, hence the $E_i$ and $Q_i$ are elliptic 
curves provided that $\Delta$ is nonzero. 

Translating the Vélu $2$-isogeny through to these models we find the 
following expressions for the isogeny decomposition of doubling.

\begin{lemma}
The $2$-isogeny $\psi: Q_1 \rightarrow Q_2$ with kernel $\langle (0:0:1:0) \rangle$
sends $(X_0,X_1,X_2,X_3)$ to
$$
\big(\,(X_0 - vX_2)^2,\; (X_0 - vX_2)X_1,\; X_1^2,\; vX_1 X_2 + (X_0 + v X_2)X_3\,\big)
$$
and the dual isogeny $\phi$ sends $(X_0,X_1,X_2,X_3)$ to
$$
\big(\,(X_0 + 4vX_2)^2,\; (X_1 + 2X_3)^2,\; (4X_1 + (4u + 1)X_2)^2,\; f_3\,\big),
$$
where
$
f_3 = u X_1^2 - 8v X_1X_2 - (4u + 1)vX_2^2 + 2X_0X_3 + 4u X_1 X_3 - 8vX_2X_3 - X_3^2).
$
\end{lemma}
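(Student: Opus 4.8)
The plan is to obtain both maps by transporting V\'elu's isogeny through the embedding $(x,y)\mapsto(X_0,X_1,X_2,X_3)=(x^2,x,1,y)$, and to use the exactness results of Section~\ref{sec:exact_morphisms} to certify that the outcome is a genuine complete tuple of quadratic forms rather than merely a rational map. The models $Q_1$ and $Q_2$ are symmetric with embedding divisor class $4(\oO)$, hence embedding class $T_1=T_2=\oO\in E[2]$; applying the final corollary of Section~\ref{sec:exact_morphisms} with even degree $n=2$ and even model degree $d=4$, and noting $T_2=\oO_2=\psi(\oO_1)\in\psi(Q_1[2])$, shows that $\psi$ and its dual $\phi$ are exact. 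By Theorem~\ref{thm:exact-isogeny-sheaves} each is therefore given by a tuple of degree-$2$ forms that is complete and unique up to a scalar, so it suffices to exhibit a quadratic tuple that agrees with the isogeny as a rational map.

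First I would write down V\'elu's affine $2$-isogeny $E_1\to E_2$ for the kernel point $(0,0)$, obtaining $(x,y)\mapsto(x',y')$ with $x'=(x^2-v)/x$ and $y'=\big(vx+(x^2+v)y\big)/x^2$. Composing with the embedding of $Q_2$ gives the image point $\big((x')^2:x':1:y'\big)$; clearing the common denominator $x^2$ and substituting $X_0=x^2$, $X_1=x$, $X_2=1$, $X_3=y$, while using the relation $X_1^2=X_0X_2$ to homogenize the third coordinate, reproduces the stated formula for $\psi$. The dual $\phi$ is treated identically, starting from the V\'elu isogeny $E_2\to E_1$ attached to the corresponding kernel $2$-torsion point; here the homogenization and subsequent reduction are heavier, and this is where the three perfect-square coordinates and the form $f_3$ must appear.

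The step I expect to be the main obstacle is the reduction of the homogenized numerators modulo the defining ideal. For $\psi$ this is immediate, but for $\phi$ one must show that the raw homogenization equals the displayed forms only after reducing modulo $I_{Q_2}$, that is, using $X_1^2=X_0X_2$ together with the quartic relation $X_3^2+X_1X_3=(X_0+4vX_2)(X_1+uX_2)+vX_2^2$. Concretely, the consistency of the first three square coordinates is the congruence $(X_1+2X_3)^2\equiv(X_0+4vX_2)\big(4X_1+(4u+1)X_2\big)\pmod{I_{Q_2}}$, which simultaneously certifies that the image satisfies $X_1^2=X_0X_2$ on $Q_1$, and putting the $y'$-numerator into the form $f_3$ requires the same reduction. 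Finally, completeness follows since, $\psi$ being exact, the space of degree-$2$ defining tuples is one-dimensional by Theorem~\ref{thm:exact-isogeny-sheaves}, so the constructed tuple, which defines $\psi$ as a rational map, must be the complete generator up to scale; alternatively one checks directly that the forms have no common zero on $Q_1$ and on $Q_2$, respectively.
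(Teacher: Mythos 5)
Your proposal is correct and follows exactly the route the paper intends: the lemma is stated there without a written proof, justified only by the phrase ``translating the V\'elu $2$-isogeny through to these models,'' and your computation (V\'elu's $x'=(x^2-v)/x$, $y'=(vx+(x^2+v)y)/x^2$ for the kernel $(0,0)$, composed with $(x,y)\mapsto(x^2,x,1,y)$, cleared of denominators and reduced modulo $X_1^2=X_0X_2$ and the quartic relation) is precisely that translation, and your key congruence $(X_1+2X_3)^2\equiv(X_0+4vX_2)(4X_1+(4u+1)X_2)\pmod{I_{Q_2}}$ checks out. The additional certification via exactness and Theorem~\ref{thm:exact-isogeny-sheaves} is consistent with the paper's framework and harmless, though not needed for the verification itself.
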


\subsubsection*{Efficient isogeny evaluation}
For each of the tuples $(f_0,f_1,f_2,f_3)$, we next determine 
quadratic forms $g_0$, $g_1$, $g_2$, $g_3$, each a square or 
product, spanning the same space and such that the basis 
transformation involves only coefficients which are polynomials 
in the parameters $u$ and $v$. 
In order to determine a projective isomorphism, it is necessary 
and sufficient that the determinant of the transformation be 
invertible, but it is not necessary to compute its inverse. 
As previously noted, the evaluation of equality among quadratic 
polynomials on the domain curve $Q_i$ is in $k[Q_i]$, i.e.~modulo 
the 2-dimension space of relations for $Q_i$.

\begin{lemma}
If $k$ is a field of characteristic different from 2, the quadratic 
defining polynomials for $\psi$ are spanned by the following forms
$$
(g_0,g_1,g_2,g_3) = \big(\,
  (X_0 - vX_2)^2,\;
  (X_0 - vX_2 + X_1)^2,\;
  X_1^2,\;
  (X_0 + X_1 + vX_2 + 2X_3)^2\,\big).
$$
\end{lemma}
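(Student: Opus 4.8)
The plan is to exhibit each $g_i$ as an explicit linear combination of the defining polynomials $f_0,\dots,f_3$ of the previous lemma, working modulo the $2$-dimensional space of quadratic relations cutting out $Q_1$ in $\PP^3$, namely $X_1^2 = X_0X_2$ and $X_3^2 + X_1X_3 = (X_0 + uX_1 - vX_2)X_1$. Since $\psi$ is an exact $2$-isogeny between degree-$4$ models (Theorem~\ref{thm:exact-isogeny-sheaves} with $n = 2$), its space of quadratic defining polynomials is the $4$-dimensional image of $\psi^*\colon \Gamma(Q_2,\cL_2)\to\Gamma(Q_1,\cL_1^2)$, spanned by the linearly independent forms $f_0,f_1,f_2,f_3$. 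It therefore suffices to show that each $g_i$ lies in this span and that the resulting transformation is invertible.

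First I would record the two immediate identities $g_0 = f_0$ and $g_2 = f_2$, and expand the perfect square $g_1 = (X_0 - vX_2 + X_1)^2 = f_0 + 2f_1 + f_2$, which is a polynomial identity requiring no reduction. The only substantive computation is the reduction of $g_3 = (X_0 + X_1 + vX_2 + 2X_3)^2$: I would expand the square into monomials, use $X_1^2 = X_0X_2$ to clear the $X_0X_2$ term and the second quadric to eliminate $X_3^2$ (this is where the $4X_3^2$ arising from the coefficient $2X_3$ is absorbed), and then match the reduced expression against the reduced forms of the $f_j$ in the eight-dimensional monomial basis of $\Gamma(Q_1,\cL_1^2)$. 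The expected outcome is $g_3 \equiv f_0 + 6f_1 + (1 + 4u + 4v)f_2 + 4f_3$.

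With these four relations in hand, the change-of-basis matrix expressing $(g_0,g_1,g_2,g_3)$ in terms of $(f_0,f_1,f_2,f_3)$ has $f_3$ appearing only in $g_3$, so a cofactor expansion along the $f_3$-column reduces its determinant to $4$ times the determinant of a $3\times 3$ block, yielding $8$. This is invertible precisely when $\mathrm{char}(k)\neq 2$, which is exactly the role of the hypothesis: the three perfect squares $g_0,g_1,g_3$ together with $g_2 = X_1^2$ then form a basis of the same $4$-dimensional space as the $f_i$, proving the claim.

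The main obstacle, and essentially the only nonroutine step, is the reduction of $g_3$ modulo the second quadric relation, since this is the relation mixing all four coordinates; it determines both the coefficient $1 + 4u + 4v$ of $f_2$ and the factor $4$ on $f_3$ that ultimately produces the determinant $8$. As a consistency check I would verify that the $X_1X_3$ term cancels in the reduction, since no such monomial survives among the reduced $f_j$ and its persistence would signal an arithmetic slip.
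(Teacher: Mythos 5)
Your proposal is correct and follows essentially the same route as the paper: both proofs amount to exhibiting the explicit invertible linear change of basis between $(g_0,g_1,g_2,g_3)$ and $(f_0,f_1,f_2,f_3)$ modulo the two quadric relations of $Q_1$, and observing that its determinant is a power of $2$ (you get $8$ in your direction; the paper gets $32$ for the inverse transformation scaled by $4$), hence invertible exactly when $\mathrm{char}(k)\neq 2$. Your coefficients $g_1 = f_0 + 2f_1 + f_2$ and $g_3 \equiv f_0 + 6f_1 + (1+4u+4v)f_2 + 4f_3$ check out, including the cancellation of the $X_1X_3$ terms.
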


\begin{proof}
By scaling the defining polynomials $(f_0,f_1,f_2,f_3)$ by $4$, 
the projective transformation from $(g_0,g_1,g_2,g_3)$ is given 
by $(4f_0,4f_2) = (4g_0,4g_2)$,
$$
4f_1 = -2(f_0 - f_1 + f_2) \mbox{ and } 
4f_3 = 2f_0 - 3f_1 + 2(1 - 2(u + v)) + f_3.
$$ 
Since the transformation has determinant $32$, it defines an isomorphism
over any field of characteristic different from 2.
\end{proof}

\begin{lemma}
If $k$ is a field of characteristic 2, the quadratic defining polynomials 
for $\psi$ are spanned by the following forms
$$
(g_0,g_1,g_2,g_3) = 
\big(\,(X_0 + vX_2)^2,\; (X_1 + X_3)X_3,\; X_1^2,\; (X_0 + v(X_1 + X_3))(X_2 + X_3)\,\big).
$$
\end{lemma}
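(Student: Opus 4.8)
The plan is to mirror the case of characteristic $\neq 2$: exhibit an explicit linear transformation expressing the defining tuple $(f_0,f_1,f_2,f_3)$ of $\psi$, as given in the first lemma, in terms of the proposed forms $(g_0,g_1,g_2,g_3)$, working modulo the two defining relations of $Q_1$, and then verify that this transformation is invertible in characteristic $2$. The one structural change is that the scaling-by-$4$ used previously is unavailable, since $4 = 0$; consequently the relations of $Q_1$ must be used in an essential way to carry out the reductions, rather than merely to certify an equality.

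First I would specialize the $f_i$ to characteristic $2$, where the signs collapse, giving $f_0 = (X_0 + vX_2)^2$, $f_1 = (X_0 + vX_2)X_1$, $f_2 = X_1^2$, and $f_3 = vX_1X_2 + (X_0 + vX_2)X_3$. Two identifications are then immediate, with no recourse to the relations: $g_0 = f_0$ and $g_2 = f_2$.

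Next I would reduce $g_1$ and $g_3$ into the span of the $f_i$ using the relations $X_3^2 + X_1X_3 = (X_0 + uX_1 + vX_2)X_1$ and $X_1^2 = X_0X_2$ of $Q_1$ (with signs collapsed in characteristic $2$). Expanding $g_1 = X_1X_3 + X_3^2$ and applying the first relation yields $g_1 \equiv f_1 + u g_2$, hence $f_1 \equiv g_1 + u g_2$. Expanding $g_3 = (X_0 + vX_1 + vX_3)(X_2 + X_3)$, replacing $X_0X_2$ by $X_1^2 = g_2$ via the second relation, and collecting $v(X_1X_3 + X_3^2) = v g_1$, leaves precisely $f_3$ among the remaining terms; thus $g_3 \equiv g_2 + v g_1 + f_3$, giving $f_3 \equiv v g_1 + g_2 + g_3$.

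Assembling these four expressions, the matrix carrying $(g_0,g_1,g_2,g_3)$ to $(f_0,f_1,f_2,f_3)$ is
$$
\begin{pmatrix} 1 & 0 & 0 & 0 \\ 0 & 1 & u & 0 \\ 0 & 0 & 1 & 0 \\ 0 & v & 1 & 1 \end{pmatrix},
$$
and a short cofactor expansion along the first column and then the last shows its determinant equals $1$. Since $1 \neq 0$ in characteristic $2$, the transformation is invertible, so the two tuples span the same space of quadratic forms modulo $I_{Q_1}$, as claimed. The verification is routine once the $g_i$ are given; the only genuine subtlety, and the analogue of the hard part in the previous lemma, is recognizing that in characteristic $2$ the correct normalization replaces the scaling trick by these two relation-based reductions.
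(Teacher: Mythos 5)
Your proof is correct and follows essentially the same route as the paper: identify $g_0=f_0$ and $g_2=f_2$, reduce $g_1$ and $g_3$ modulo the two defining relations of $Q_1$ to obtain a triangular change of basis, and check that its determinant is $1$. Your explicit relation-based reductions ($f_1 = g_1 + u g_2$, $f_3 = v g_1 + g_2 + g_3$) merely supply the computation the paper leaves implicit, and they check out.
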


\begin{proof}
The transformation from $(g_0,g_1,g_2,g_3)$ to the tuple 
$(f_0,f_1,f_2,f_3)$ of defining polynomials is given by 
$(g_0,g_2) = (f_0,f_2)$, 
$$
(g_1,g_3) = (f_1 + uf_2, vf_1 + f_2 + f_3).
$$
The transformation has determinant 1 hence is an isomorphism.
\end{proof}

\begin{corollary}
The isogeny $\psi$ can be evaluated with $4\Sqr$ in characteristic 
different from $2$ and $2\Mul + 2\Sqr$ in characteristic $2$.
\end{corollary}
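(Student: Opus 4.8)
The plan is to read the cost directly off the two preceding lemmas, treating the corollary as a bookkeeping consequence of their explicit bases together with the invertibility of the stated transformations. First I would handle the case of characteristic different from $2$. Each of the four spanning forms
$$
(g_0,g_1,g_2,g_3) = \big(\,(X_0 - vX_2)^2,\; (X_0 - vX_2 + X_1)^2,\; X_1^2,\; (X_0 + X_1 + vX_2 + 2X_3)^2\,\big)
$$
is the square of a linear form in the coordinates $X_j$. Assembling such a linear form uses only additions and multiplications by the fixed constants $v$ and $2$, which by our conventions we charge as $\mul$ or ignore; the single multiplication $vX_2$ can moreover be shared, since $X_0 - vX_2$ recurs in $g_0$, $g_1$, and (up to sign) $g_3$. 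Hence each $g_i$ costs exactly $1\Sqr$, for a total of $4\Sqr$.

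It remains to pass from the basis $(g_0,\dots,g_3)$ to the tuple $(f_0,\dots,f_3)$ actually defining $\psi$. This is supplied by the preceding lemma: the transformation is explicit, with entries that are constants or polynomials in $u$ and $v$, and has determinant $32$, hence is invertible in characteristic different from $2$. Recovering the $f_i$ from the $g_i$ therefore costs only additions and $\mul$'s, so a complete tuple for $\psi$ is obtained at a total of $4\Sqr$, as claimed.

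For characteristic $2$ I would argue identically, now reading off the basis
$$
(g_0,g_1,g_2,g_3) = \big(\,(X_0 + vX_2)^2,\; (X_1 + X_3)X_3,\; X_1^2,\; (X_0 + v(X_1 + X_3))(X_2 + X_3)\,\big).
$$
Here $g_0$ and $g_2$ are squares of linear forms, contributing $2\Sqr$, while $g_1$ and $g_3$ are products of two distinct linear forms, each requiring one genuine field multiplication, contributing $2\Mul$ (the scalar multiplications by $v$ again being $\mul$'s). The transformation to $(f_0,\dots,f_3)$ given by the relevant lemma has determinant $1$ and thus costs only additions and $\mul$'s, yielding the total $2\Mul + 2\Sqr$.

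The one point requiring care, rather than mere arithmetic, is the justification that the recombined tuple $(f_0,\dots,f_3)$ genuinely defines $\psi$ as a morphism and not merely as a rational map: the $g_i$ must span the same space of degree-$2$ defining polynomials and give a complete tuple. This is exactly the content secured by the nonvanishing of the determinants ($32$ and $1$, respectively) in the two lemmas, combined with the uniqueness and completeness statements of Theorem~\ref{thm:exact-isogeny-sheaves}; equality of the forms need only be checked modulo the two-dimensional space of relations on $Q_1$. Once this is in place the corollary reduces, as above, to distinguishing which spanning forms are squares and which are products.
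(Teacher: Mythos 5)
Your proposal is correct and follows exactly the route the paper intends: the corollary is an immediate bookkeeping consequence of the two preceding lemmas, counting one $\Sqr$ for each spanning form that is a square of a linear form and one $\Mul$ for each that is a product of distinct linear forms, with the base-change to the defining polynomials costing only additions and multiplications by constants thanks to the invertible transformations of determinant $32$ and $1$. The paper offers no separate proof, and your added remark on completeness via Theorem~\ref{thm:exact-isogeny-sheaves} is consistent with its framework.
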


\begin{lemma}
Over any field $k$, the quadratic defining polynomials for $\phi$ are 
spanned by the square forms $(g_0,g_1,g_2,g_3)$:
$$
\big(\,
  (X_0 + 4vX_2)^2,\; (X_1 + 2X_3)^2,\;
  (4X_1 + (4u + 1)X_2)^2,\; (X_0 + (2u + 1)X_1 - 4v X_2 + X_3)^2\,\big).
$$
\end{lemma}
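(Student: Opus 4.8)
The plan is to exploit that the first three defining polynomials are \emph{already} the asserted squares: directly from the expression for $\phi$ in the previous lemma one reads off $f_0 = g_0 = (X_0 + 4vX_2)^2$, $f_1 = g_1 = (X_1 + 2X_3)^2$, and $f_2 = g_2 = (4X_1 + (4u+1)X_2)^2$. Thus the whole content of the lemma is to show that the remaining square $g_3 = (X_0 + (2u+1)X_1 - 4vX_2 + X_3)^2$ lies in the $k$-span of $f_0,f_1,f_2,f_3$ modulo the degree-two part of the ideal of the domain curve, with an invertible change of basis. Since the domain of $\phi$ is $Q_2$, the relevant relations are the two quadrics cutting out $Q_2$, namely $R_1 : X_1^2 - X_0 X_2$ and $R_2 : X_3^2 + X_1 X_3 - (X_0 + 4vX_2)(X_1 + uX_2) - vX_2^2$.

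Concretely, I would expand $g_3$ and form the quadratic form $g_3 - f_3$, then clear its monomials one type at a time, peeling off the square forms and the two relations in an order dictated by which monomials each uniquely controls. First I would remove the $X_0^2$ term with $g_0$ (the only form containing $X_0^2$); then eliminate the $X_0 X_1$ term with $R_2$ (the only relation containing $X_0 X_1$); then clear the residual $X_0 X_2$ term with $R_1$, trading it for an $X_1^2$ contribution; then use $g_1$ to annihilate the $X_3^2$ and $X_1 X_3$ terms simultaneously, since they survive in the same ratio in which they occur in $g_1$; the remainder is then a multiple of $g_2$, which closes the reduction. I expect the outcome to be the identity
$$
g_3 \equiv f_3 + g_0 + (u+1)g_1 - v\,g_2 \pmod{R_1, R_2},
$$
so that the transformation carrying $(f_0,f_1,f_2,f_3)$ to $(g_0,g_1,g_2,g_3)$ is lower triangular with unit diagonal, of determinant $1$.

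Because every coefficient appearing in this reduction — the multipliers of $g_0, g_1, g_2$ and of the two relations $R_1, R_2$ — is a polynomial in $u$ and $v$ with integer coefficients, and because the final change of basis has determinant $1$, no denominators intervene and the equality of spans holds over $\ZZ[u,v]$, hence over any field $k$; this is precisely what justifies the ``over any field'' in the statement, in contrast with the characteristic-dependent determinants $32$ and $1$ met for $\psi$. I expect the main obstacle to be the bookkeeping in the middle of the reduction: the relation $R_2$ must first be expanded from its product form, and $R_1$ and $R_2$ then combined in exactly the right multiples to kill the cross terms $X_0 X_1$ and $X_0 X_2$ that the pure squares $g_0, g_1, g_2$ cannot reach. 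After that, the coincidence that the surviving $X_1^2$, $X_1 X_2$, $X_2^2$ coefficients are proportional to those of $g_2$ lets the computation terminate cleanly.
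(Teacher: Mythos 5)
Your proposal matches the paper's proof: the paper likewise observes that $(g_0,g_1,g_2)=(f_0,f_1,f_2)$ and verifies the single identity $f_3 = -g_0-(u+1)g_1+vg_2+g_3$ (the same relation you predict, rearranged), a determinant-$1$ change of basis valid over $\ZZ[u,v]$ and hence over any field. Your additional detail about reducing modulo the two quadrics of $Q_2$ is exactly the (implicit) sense in which the paper's equality holds, so the approaches coincide.
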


\begin{proof}
The forms $(g_0,g_1,g_2)$ equal $(f_0,f_1,f_2)$, and it suffices 
to verify the equality $f_3 = - g_0 - (u + 1)g_1 + vg_2 + g_3$, 
a transformation of determinant~$1$. 
\end{proof}

\begin{lemma}
If $k$ is a field of characteristic 2, the quadratic defining polynomials 
for $\phi$ are spanned by $(X_0^2, X_1^2, X_2^2, X_3^2 )$.
\end{lemma}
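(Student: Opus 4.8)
The plan is to reduce modulo $2$ the square forms $(g_0,g_1,g_2,g_3)$ furnished by the preceding lemma and then to confirm that the four reductions remain linearly independent. The substitution $a_1 = 4u+1$, $b_1 = -16v$ was chosen precisely so that $Q_1$ has good reduction at $2$; consequently every defining polynomial for $\phi$ has integral coefficients and reduces term by term, so the reduced map is still a well-defined isogeny to which the analysis applies.

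First I would specialize the square forms
$$
(g_0,g_1,g_2,g_3) = \big(\,(X_0 + 4vX_2)^2,\;(X_1 + 2X_3)^2,\;(4X_1 + (4u+1)X_2)^2,\;(X_0 + (2u+1)X_1 - 4vX_2 + X_3)^2\,\big)
$$
to characteristic $2$. Using that $4v$, $2$ and $4$ reduce to $0$ while $4u+1$ and $2u+1$ reduce to $1$, together with the additivity of squaring in characteristic $2$, the first three forms collapse to $X_0^2$, $X_1^2$, $X_2^2$, while the last becomes $(X_0 + X_1 + X_3)^2 = X_0^2 + X_1^2 + X_3^2$. (Equivalently, one may reduce $f_3$ directly: each of its mixed monomials $8vX_1X_2$, $2X_0X_3$, $4uX_1X_3$, $8vX_2X_3$ carries an even coefficient and vanishes, leaving $f_3 \equiv uX_1^2 + vX_2^2 + X_3^2$.) In either case the span is contained in $\langle X_0^2, X_1^2, X_2^2, X_3^2\rangle$.

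For the reverse inclusion I would exhibit the transition from $(X_0^2,X_1^2,X_2^2,X_3^2)$ to the reduced generators as a matrix of unit determinant; explicitly $X_3^2 = g_3 + g_0 + g_1$ in characteristic $2$, so all four coordinate squares lie in the span. The computation is entirely routine, and the only step needing care is the bookkeeping that confirms every cross term carries an even coefficient, so that the reduction genuinely collapses to squares and the surviving matrix is unit-triangular. Conceptually the statement records that, modulo $2$, the dual isogeny $\phi$ degenerates to the coordinatewise Frobenius $(X_0:X_1:X_2:X_3) \mapsto (X_0^2:X_1^2:X_2^2:X_3^2)$, the expected inseparable factor of $[2]$ in characteristic $2$.
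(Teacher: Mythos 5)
Your proof is correct and follows the same route as the paper, which disposes of this lemma in one line by either direct inspection of the defining polynomials or specialization of the preceding lemma to characteristic $2$; you simply carry out both of those verifications explicitly (reducing the square forms $g_i$ to $X_0^2$, $X_1^2$, $X_2^2$, $(X_0+X_1+X_3)^2$ and checking the transition matrix is still invertible). The added detail is welcome but the argument is the paper's own.
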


\begin{proof}
It is verified by inspection that the isogeny $\phi$ is defined 
by a linear combination of the squares of $(X_0,X_1,X_2,X_3)$ or 
by specializing the previous lemma to characteristic~2. 
\end{proof}

\begin{corollary}
The isogeny $\phi$ can be evaluated with $4\Sqr$ over any field.
\end{corollary}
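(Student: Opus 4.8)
The plan is to read the operation count straight off the two preceding lemmas, interpreting it through the complexity conventions of Section~\ref{sec:efficient_arithmetic}, under which additions and multiplications by fixed constants (the class $\mul$, here the coefficients in $u$ and $v$) are ignored and only genuine field multiplications $\Mul$ and squarings $\Sqr$ are charged. First I would note that the preceding lemmas already exhibit $\phi$ as given by a complete tuple $(f_0,f_1,f_2,f_3)$ of quadratic forms, which by Theorem~\ref{thm:exact-isogeny-sheaves} is unique up to a scalar. Hence the cost of evaluating $\phi$ at a point is exactly the cost of evaluating any fixed basis of the four-dimensional space these forms span, followed by a linear recombination to recover the $f_j$ themselves.

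Next I would appeal to the lemma valid over any field, which produces such a basis $(g_0,g_1,g_2,g_3)$ in which every generator is the square of a linear form in $X_0,\dots,X_3$. Evaluating each of these linear forms at a point consumes only additions and multiplications by the constants $2$, $4$, $u$, $v$, all of which fall into the ignored classes; squaring the four resulting values then costs precisely $4\Sqr$. To pass back to the defining tuple I would use the constant-coefficient transformation recorded in the same lemma, namely $(f_0,f_1,f_2)=(g_0,g_1,g_2)$ together with $f_3 = -g_0-(u+1)g_1+vg_2+g_3$; this recombination again involves only additions and multiplication by constants, and since its determinant is $1$ the $g_i$ are genuinely a basis of the same four-dimensional space. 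It contributes no further $\Mul$ or $\Sqr$, so the total is $4\Sqr$.

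Finally I would dispatch the characteristic-$2$ case. Here the dedicated lemma replaces the generating set by the cleaner $(X_0^2,X_1^2,X_2^2,X_3^2)$, again four perfect squares, so the identical argument yields $4\Sqr$ and the bound holds uniformly across all characteristics. There is no genuine obstacle at this stage: the substance of the result lies entirely in the two lemmas, which exhibit a basis of the defining space consisting of squares of linear forms. The only point that warrants any care is confirming that the change of basis between this square basis and the true defining polynomials has coefficients lying in $k[u,v]$, so that recovering the $f_j$ adds nothing to the $\Mul$ or $\Sqr$ count; this is exactly what the explicit transformation stated in the $\phi$-lemma certifies.
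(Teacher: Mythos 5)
Your argument is exactly the one the paper intends: the corollary follows immediately from the preceding lemmas because the spanning forms are all squares of linear forms (whose evaluation costs only additions and multiplications by the constants $2$, $4$, $u$, $v$, which are ignored), and the change of basis back to $(f_0,\dots,f_3)$ likewise involves only constants. The paper gives no separate proof for this corollary, and your reading-off of the $4\Sqr$ count, including the check that the determinant-$1$ transformation with coefficients in $k[u,v]$ contributes nothing to the $\Mul$/$\Sqr$ count, matches its reasoning in both the general and characteristic-$2$ cases.
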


\begin{corollary}
\label{cor:naive_level2_doubling}
Doubling on $Q_1$ or $Q_2$ can be carried out with $8\Sqr$ over 
a field of characteristic different from $2$, or $2\Mul + 6\Sqr$ 
over a field of characteristic~$2$. 
\end{corollary}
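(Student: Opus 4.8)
The plan is to realize doubling as the composition of the two isogenies already analysed. By duality $\hat\psi = \phi$, so $\phi\circ\psi = [\deg\psi] = [2]$ on $Q_1$ and $\psi\circ\phi = [2]$ on $Q_2$; hence doubling on either model amounts to evaluating one factor followed by the other, and the cost of $[2]$ is just the sum of the two costs established in the two preceding corollaries. Since each factor is exact of degree $2$ between these degree-$4$ models, which are symmetric with embedding class $O\in E[2]$, the composite tuple of quartics is automatically complete and represents $[2]$, so no separate base-point check is required.

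Executing this, over a field of characteristic different from $2$ the generators for $\psi$ cost $4\Sqr$ and those for $\phi$ cost $4\Sqr$, for a total of $8\Sqr$; in characteristic $2$ the generators for $\psi$ cost $2\Mul+2\Sqr$ while those for $\phi$ again cost $4\Sqr$, giving $2\Mul+6\Sqr$. The two factors appear in opposite orders in $\phi\circ\psi$ and $\psi\circ\phi$, so doubling on $Q_1$ and on $Q_2$ incur identical totals.

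The step needing care --- and the only place a hidden $\Mul$ or $\Sqr$ could intrude --- is the passage between the two stages. After computing the square-or-product generators $g_i$ of the first isogeny one must recover its coordinate functions $f_i$ via the linear transformations recorded in the lemmas above; these involve only $u$, $v$ and small integers, hence only constant-multiplications and additions. The linear forms occurring inside the squares or products of the second isogeny are then assembled from these $f_i$, again by constant-multiplications and additions, before the single $\Mul$ or $\Sqr$ producing each new generator. Thus every field multiplication or squaring is accounted for by the $4+4$ (respectively $2+2$ and $4$) generator evaluations, while the remaining operations fall into the uncounted classes $\mul$ and additions; summing the two stages then yields the stated complexities.
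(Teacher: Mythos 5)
Your proposal is correct and follows exactly the route the paper intends: the corollary is an immediate consequence of the two preceding corollaries, obtained by composing $\psi$ and $\phi$ (in either order, since $[2]=\phi\circ\psi$ on $Q_1$ and $[2]=\psi\circ\phi$ on $Q_2$) and summing the costs $4\Sqr + 4\Sqr$ in characteristic $\ne 2$ and $(2\Mul+2\Sqr)+4\Sqr$ in characteristic $2$. Your extra care about the intermediate linear change of basis costing only $\mul$ and additions is exactly the accounting convention the paper uses throughout.
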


\subsubsection*{Factorization through singular quotients}
With the given strategy of computing the isogenies of projectively 
normal models $\psi: Q_1 \rightarrow Q_2$ then $\phi: Q_2 \rightarrow Q_1$, 
this result is optimal or nearly so --- to span the spaces of 
forms of dimension~$4$, in each direction, one needs at least 
four operations.  
We thus focus on replacing $Q_1$ by a singular quartic curve 
$D_1$ in $\PP^2$ such that the morphisms induced by the isogenies 
between $Q_2$ and $Q_1$ remain well-defined but for which we can 
save one operation in the construction of the coordinate functions 
of the singular curves. 
We treat characteristic different from~$2$ and the derivation of 
a doubling algorithm improving on $2\Mul + 5\Sqr$; an analogous 
construction in characteristic~2 appears in Kohel~\cite{Kohel2012}.

Let $T = (0:0:1:0)$ be the $2$-torsion point on $Q_1$, 
which acts by translation as:
$$
\tau_T(X_0:X_1:X_2:X_3) = (vX_2: -X_1: v^{-1}X_0: X_1 + X_3)
$$
Similarly, the inverse morphism is:
$$
[-1](X_0:X_1:X_2:X_3) = (X_0 : X_1 : X_2 : -(X_1 + X_3))
$$
Over a field of characteristic different from $2$, the 
morphism from $Q_1$ to $\PP^2$ 
$$
(X_0:X_1:X_2:X_3) \longmapsto (X:Y:Z) = (X_0:X_1+2X_3:X_2)
$$ 
has image curve:
$$
D_1 : (Y^2 - (4u + 1)XZ)^2 = 16XZ(X - vZ)^2,
$$
on which $\tau_T$ and $[-1]$ induce linear transformations, 
since the subspace generated by $\{ X_0, X_1 + 2X_3, X_2\}$ is 
stabilized by pullbacks of both $[-1]$ and $[\tau_T]$.  
The singular subscheme of $D_1$ is $X = vZ$, $Y^2 = (4u + 1)vZ^2$, 
which has no rational points if $(4u+1)v$ is not a square, and 
in this case, the projection to $D_1$ induces an isomorphism of 
the set of nonsingular points.  Since $\tau_T$ acts linearly, 
the morphism $\psi: Q_1 \rightarrow Q_2$ maps through $D_1$, as 
given by the next lemma.

\begin{lemma}
\label{lem:psi_square_forms}
The $2$-isogeny $\psi: Q_1 \rightarrow Q_2$ induces a morphism 
$D_1 \rightarrow Q_2$ sending $(X,Y,Z)$ to
$$
\big(\,8(X - vZ)^2,\; 2(Y^2 - (4u + 1)XZ),\; 8XZ,\; 
4Y(X + vZ) - (Y^2 - (4u + 1)XZ\, \big).
$$
This defining polynomials are spanned by
$$
(g_0,g_1,g_2) = \big(\, (X - vZ)^2,\; Y^2,\; (X + vZ)^2,\; (X + Y + vZ)^2\,\big).
$$
In particular the morphism can be evaluated with $4\Sqr$.
\end{lemma}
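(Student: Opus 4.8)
The plan is to establish the three assertions in turn: that the stated tuple is the morphism induced by $\psi$ on $D_1$, that its four components span the same space as $(g_0,g_1,g_2,g_3)$, and that this yields a $4\Sqr$ evaluation.

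\textbf{Step 1 (the formula).} First I would substitute $X = X_0$, $Y = X_1 + 2X_3$, $Z = X_2$ into each of the four proposed components and reduce modulo $I_{Q_1}$, i.e.\ using the quadric relation $X_1^2 = X_0X_2$ together with the curve relation $X_3^2 + X_1X_3 = (X_0 + uX_1 - vX_2)X_1$. The goal is to show that the components equal $8f_0, 8f_1, 8f_2, 8f_3$ respectively, for the defining tuple $(f_0,f_1,f_2,f_3)$ of $\psi$ from the preceding lemma; the common factor $8$ being irrelevant projectively, this identifies the stated tuple with $\psi$ re-expressed on $D_1$. Here $8(X-vZ)^2 = 8f_0$ is immediate, and $8XZ = 8X_1^2 = 8f_2$ uses the quadric relation. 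The crux is the identity $Y^2 - (4u+1)XZ \equiv 4(X_0 - vX_2)X_1 = 4f_1$, which requires the curve relation to eliminate $X_3^2$ and $X_1X_3$; the fourth component then follows by expanding $4Y(X+vZ) - (Y^2 - (4u+1)XZ)$ and regrouping into $8f_3$. That $\psi$ descends to $D_1$ at all is exactly the $\tau_T$- and $[-1]$-invariance of the subspace $\langle X_0, X_1+2X_3, X_2\rangle$ recorded before the lemma (equivalently $\psi = \psi\circ\tau_T$, since $T$ generates $\ker\psi$), and Step~1 simultaneously recovers the equation of $D_1$ from $Y^2 - (4u+1)XZ = 4(X-vZ)X_1$, whence $(Y^2-(4u+1)XZ)^2 = 16(X-vZ)^2X_1^2 = 16XZ(X-vZ)^2$.

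\textbf{Step 2 (the spanning forms).} Next I would expand $g_0 = (X-vZ)^2$, $g_1 = Y^2$, $g_2 = (X+vZ)^2$, $g_3 = (X+Y+vZ)^2$ in the monomial basis and read off the two relations $g_2 - g_0 = 4vXZ$ and $g_3 - g_1 - g_2 = 2Y(X+vZ)$. These let me rewrite $8XZ$, $2(Y^2 - (4u+1)XZ)$, and $4Y(X+vZ) - (Y^2-(4u+1)XZ)$ as explicit linear combinations of the $g_j$, while $8(X-vZ)^2 = 8g_0$. The resulting base-change matrix is then checked to be invertible; its determinant is a nonzero scalar multiple of $1/v$, and $v \ne 0$ since $\Delta = v^2((4u+1)^2 - 64v)$ is nonzero. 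Hence the four defining polynomials and the four forms $g_j$ span the same space.

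\textbf{Step 3 (complexity) and the obstacle.} Finally, each $g_j$ is the square of a linear form in $X,Y,Z$, so the four forms are obtained with $4\Sqr$ (the scalings by $v$ inside the linear forms are multiplications by a fixed constant, and the passage from the $g_j$ back to the actual components uses only constant multiplications and additions), giving the claimed $4\Sqr$ evaluation. I expect the main obstacle to be Step~1, and within it the reduction establishing $Y^2 - (4u+1)XZ \equiv 4f_1$ and the regrouping of the fourth component into $8f_3$: these are the only places where both the quadric and the cubic curve relation of $Q_1$ must be invoked, and a single sign or coefficient slip there would propagate into a spurious $D_1$-equation. Steps~2 and~3 are then routine linear algebra and bookkeeping.
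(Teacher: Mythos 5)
Your proposal is correct; the paper states this lemma without proof, and your direct verification is exactly the computation the author leaves implicit: substituting $X = X_0$, $Y = X_1 + 2X_3$, $Z = X_2$ and reducing modulo $X_1^2 = X_0X_2$ and $X_3^2 + X_1X_3 = (X_0 + uX_1 - vX_2)X_1$ does give $Y^2 - (4u+1)XZ \equiv 4(X_0 - vX_2)X_1 = 4f_1$ and identifies the displayed tuple with $8(f_0,f_1,f_2,f_3)$, while the relations $g_2 - g_0 = 4vXZ$ and $g_3 - g_1 - g_2 = 2Y(X+vZ)$ yield a base change of determinant $64/v$, nonzero since $\Delta = v^2((4u+1)^2 - 64v) \ne 0$ forces $v \ne 0$ and the characteristic is not $2$. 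All steps check out, including your reading of the four forms $(g_0,g_1,g_2,g_3)$ despite the statement's typographical slips.
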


\begin{lemma}
\label{lem:phi_square_forms}
The $2$-isogeny $\phi: Q_2 \rightarrow Q_1$ induces a morphism 
$\phi: Q_2 \rightarrow D_1$ sending $(X_0,X_1,X_2,X_3)$ to 
$$
\big(\,
  (X_0 + 4vX_2)^2,\;
  (X_1 + 2X_3)(2X_0 + (4u + 1)X_1 - 8vX_2),\;
  (4X_1 + (4u + 1)X_2)^2\,\big),
$$
which can be evaluated with $1\Mul + 2\Sqr$.  If $4u + 1 = -(2s + 1)^2$,
then the forms 
$$
\big(\,
  (X_0 + 4vX_2)^2,\;
  (X_0 - 4vX_2 - (2s + 1)(sX_1 - X_3))^2,\;
  (4X_1 + (4u + 1)X_2)^2\,\big),
$$
span the defining polynomials for $\phi: Q_2 \rightarrow D_1$, and 
can be evaluated with $3\Sqr$.
\end{lemma}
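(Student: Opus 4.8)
The plan is to prove both assertions by direct computation: compose the known dual-isogeny formula with the projection to $D_1$, then reduce modulo the two defining relations of $Q_2$, namely $X_1^2 = X_0 X_2$ and the curve equation $X_3^2 + X_1X_3 = X_0X_1 + uX_0X_2 + 4vX_1X_2 + (4u+1)vX_2^2$, the latter used to eliminate the $X_3^2$ produced by squaring.

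For the first assertion, I would take the formula for $\phi\colon Q_2 \rightarrow Q_1$ from the preceding lemma, whose image coordinates are $(X_0',X_1',X_2',X_3') = ((X_0 + 4vX_2)^2,\,(X_1+2X_3)^2,\,(4X_1+(4u+1)X_2)^2,\,f_3)$, and compose with the projection $Q_1 \rightarrow D_1$, $(X_0:X_1:X_2:X_3) \mapsto (X_0:X_1+2X_3:X_2)$. The $X$- and $Z$-coordinates are then immediately $X_0'$ and $X_2'$, already squares, so the only work is the middle coordinate $Y = X_1' + 2X_3' = (X_1+2X_3)^2 + 2f_3$. Expanding and reducing modulo the two relations (clearing the $X_3^2$ via the curve equation and $X_0X_2$ via $X_1^2 = X_0X_2$), I expect the mixed terms to collapse to the factored form $(X_1+2X_3)(2X_0 + (4u+1)X_1 - 8vX_2)$, exhibiting the three coordinate forms as two squares and one product, hence $1\Mul + 2\Sqr$.

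For the second assertion, the goal is to trade the single product for a third square under the hypothesis $4u+1 = -(2s+1)^2$, leaving only $3\Sqr$. Writing $g_0,g_1,g_2$ for the three coordinate forms just obtained, I would verify that the square $h_1 = (X_0 - 4vX_2 - (2s+1)(sX_1 - X_3))^2$ lies in the span $\langle g_0,g_1,g_2\rangle$ inside $\Gamma(Q_2,\cL^2)$, modulo the relations. Setting $t = 2s+1$ (so $4u+1 = -t^2$ and $u = -s^2 - s - 1/2$), I expand $h_1$, reduce the $X_3^2$ term via the curve relation and the $X_0X_2$ term via $X_1^2 = X_0X_2$, and match coefficients against $\alpha g_0 + \beta g_1 + \gamma g_2$. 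The off-diagonal monomials $X_0X_1$, $X_0X_3$, $X_1X_3$, $X_2X_3$ pin down $\alpha = 1$, $\beta = t/2$, $\gamma = -v$, and the identities $t = 2s+1$ and $u = -s^2-s-1/2$ are exactly what force the remaining $X_1^2$- and $X_2^2$-coefficients to agree. Since $\beta = (2s+1)/2 \ne 0$, the triple $(g_0,h_1,g_2)$ spans the same three-dimensional space as $(g_0,g_1,g_2)$; hence these three squares generate the defining polynomials, the coordinate forms are recovered from them by a linear map costing only $O(\mul)$, and the evaluation uses $3\Sqr$.

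The main obstacle is purely bookkeeping: every verification hinges on correctly reducing degree-two monomials modulo both relations of $Q_2$, and in particular on using the curve equation to eliminate the $X_3^2$ that appears when squaring a form involving $X_3$. The conceptually delicate point — and the reason the hypothesis $4u+1 = -(2s+1)^2$ enters — is that completing the square is possible only when $4u+1$ is minus a square; the specific linear form defining $h_1$ is reverse-engineered so that its square reduces to $g_0 + \tfrac{2s+1}{2}g_1 - vg_2$, and confirming this reduction is the crux of the argument.
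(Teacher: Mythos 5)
Your proposal matches the paper's proof in both structure and substance: the first tuple is obtained by composing the dual isogeny with the projection $(X_0:X_1:X_2:X_3)\mapsto(X_0:X_1+2X_3:X_2)$ and reducing modulo the two quadric relations of $Q_2$, and the second assertion is established by exactly the linear relation you derive, $h_1 = g_0 + \tfrac{2s+1}{2}f_1 - vg_2$, which is the paper's $(2s+1)f_1 = -2(g_0 - g_1 - vg_2)$ in its notation. The only difference is that you spell out the monomial bookkeeping the paper leaves implicit.
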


\begin{proof}
The form of the defining polynomals $(f_0,f_1,f_2)$ for the map 
$\phi: Q_2 \rightarrow D_1$ follows from composing the $2$-isogeny 
$\phi: Q_2 \rightarrow Q_2$ with the projection to $D_1$. 
The latter statement holds since, the square forms $(g_0,g_1,g_2)$
of Lemma~\ref{lem:phi_square_forms} satisfy $f_0 = g_0$, $f_2 = g_2$, 
and $(2s + 1)f_1 = -2(g_0 -  g_1 - vg_2)$.  
\end{proof}

Composing the morphism $Q_2 \rightarrow D_1$ with $D_1 \rightarrow Q_2$ 
gives the following complexity result.

\begin{theorem}
\label{thm:optimal_level2_doubling}
The doubling map on $Q_2$ over a field of characteristic different 
from $2$ can be evaluated with $1\Mul + 6\Sqr$, 
and if $4u + 1 = -(2s + 1)^2$, with $7\Sqr$. 
\end{theorem}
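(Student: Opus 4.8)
The plan is to build the doubling map on $Q_2$ as the composite of the two $2$-isogenies already treated, routed through the singular plane model $D_1$, and then to obtain the complexity by simply adding the operation counts of Lemmas~\ref{lem:phi_square_forms} and~\ref{lem:psi_square_forms}.

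First I would record the factorization. Since $\phi$ and $\psi$ are dual $2$-isogenies, $\psi\circ\phi = [2]$ on $Q_2$, so doubling is the composite $Q_2 \xrightarrow{\phi} Q_1 \xrightarrow{\psi} Q_2$. By the construction preceding Lemma~\ref{lem:psi_square_forms}, the projection $\pi\colon Q_1 \to D_1$ is birational and restricts to an isomorphism on nonsingular points; because $\{X_0, X_1+2X_3, X_2\}$ spans a subspace stable under the (linear) pullbacks of $\tau_T$ and $[-1]$, the isogeny $\psi$ descends to the morphism $\bar\psi\colon D_1 \to Q_2$ of Lemma~\ref{lem:psi_square_forms}, i.e.\ $\psi = \bar\psi\circ\pi$. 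Composing $\pi$ with $\phi$ yields the morphism $\bar\phi = \pi\circ\phi\colon Q_2 \to D_1$ of Lemma~\ref{lem:phi_square_forms}. Hence doubling factors as
$$
Q_2 \xrightarrow{\ \bar\phi\ } D_1 \xrightarrow{\ \bar\psi\ } Q_2,
\qquad \bar\psi\circ\bar\phi = \bar\psi\circ\pi\circ\phi = \psi\circ\phi = [2].
$$

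Next I would confirm that the chained tuple of quartics is a \emph{complete} tuple on $Q_2$, so that it evaluates $[2]$ at every point rather than merely on a dense open set. Concretely one checks that the image $\bar\phi(Q_2)$ meets $D_1$ only where the defining forms of $\bar\psi$ have no common zero; equivalently, that the composed quartics have no base point on $Q_2$. I expect this to be the only delicate step, since passing from the projectively normal $Q_1$ to the singular $D_1$ could a priori introduce an exceptional point for the second map when the two are chained. The hypothesis that $(4u+1)v$ is not a square, which forces the singular locus of $D_1$ to carry no rational points, is what I would use to control this; over $\bar k$ one must instead verify directly that $\bar\phi$ avoids the base locus of $\bar\psi$.

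Finally I would add the costs. By Lemma~\ref{lem:phi_square_forms}, evaluating $\bar\phi$ costs $1\Mul + 2\Sqr$ in general, and $3\Sqr$ when $4u+1 = -(2s+1)^2$; by Lemma~\ref{lem:psi_square_forms}, evaluating $\bar\psi$ on the resulting coordinates of $D_1$ costs $4\Sqr$. As the two evaluations are carried out in sequence on the intermediate point, these counts are additive, giving $1\Mul + 6\Sqr$ in general and $3\Sqr + 4\Sqr = 7\Sqr$ in the special case, as claimed.
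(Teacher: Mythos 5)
Your proposal is correct and follows the paper's own (one-line) argument exactly: doubling on $Q_2$ is computed as the composite $Q_2 \rightarrow D_1 \rightarrow Q_2$ of the morphisms of Lemmas~\ref{lem:phi_square_forms} and~\ref{lem:psi_square_forms}, and the operation counts add. Your extra attention to completeness of the composed tuple (via the descent of $\psi$ through the singular model $D_1$) is a point the paper leaves implicit, but it does not change the route.
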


\noindent
{\bf Remark.} The condition $a_1 = 4u + 1 = -(2s + 1)^2$ is 
equivalent to the condition $u = -(s^2 + s + 1/2)$. This implies 
that the curves in the family are isomorphic to one of the 
form $y^2 = x^3 - x^2 + b_1x$, where $b_1 = - 16v/(4u + 1)^2$, 
fixing the quadratic twist but not changing the level structure. 
In light of this normalization in the subfamily, we may as well
fix $s = 0$ and $4u + 1 = -1$ to achieve a simplification of the 
formulas in terms of the constants.  

\subsection{Comparison with previous doubling algorithms}
\label{sec:doubling_comparison}

We recall that the previously best known algorithms for 
doubling require $2\Mul + 5\Sqr$, obtained for Jacobi quartic 
models in $\PP^3$ (see Hisil et al.~\cite{Hisil-EdwardsRevisited})
or specialized models in weighted projective space~\cite{DIK-2006}.
We compare this base complexity to the above complexities 
which apply to any elliptic curve with a rational $2$-torsion 
point.  We include the naive $8\Mul$ algorithm of 
Corollary~\ref{cor:naive_level2_doubling}, and improvements of 
Theorem~\ref{thm:optimal_level2_doubling} to $1\Mul + 6\Sqr$ 
generically, and $7\Sqr$ for an optimal choice of twist. 
The relative costs of the various doubling algorithms, summarized 
below, show that the proposed doubling algorithms determined here 
give a non-neglible improvement on previous algorithms.  
$$
\begin{array}{rccc}
   & \multicolumn{3}{c}{\mbox{ Cost of $1\Sqr$ }}\\
\mbox{\hspace{1mm}}
\;[2]\;        & 1.00\Mul & 0.80\Mul & 0.66\Mul \\ \hline
        8\Sqr  & 8.0\Mul  & 6.40\Mul & 5.33\Mul \\
2\Mul + 5\Sqr  & 7.0\Mul  & 6.00\Mul & 5.33\Mul \\
1\Mul + 6\Sqr  & 7.0\Mul  & 5.80\Mul & 5.00\Mul \\
        7\Sqr  & 7.0\Mul  & 5.60\Mul & 4.66\Mul
\end{array}
$$
The improvements for doubling require only a $2$-torsion point, 
but imposing additional $2$-torsion or $4$-torsion structure 
would allow us to carry this doubling algorithm over to a normal 
form with symmetries admitting more efficient addition laws.


\begin{thebibliography}{999}
\bibitem{BernsteinLange-Edwards}
  D.~J.~Bernstein, T.~Lange. 
  Faster addition and doubling on elliptic curves.
  {\it Advances in Cryptology --- ASIACRYPT 2007}, 
  Lecture Notes in Computer Science, {\bf 4833}, Springer, 29--50, 2007. 

\bibitem{BernsteinEtAl-TwistedEdwards}
  D.~J.~Bernstein, P.~Birkner, M.~Joye, T.~Lange, C.~Peters, 
  Twisted Edwards curves. {\it Progress in Cryptology --- AFRICACRYPT 2008}, 
  Lecture Notes in Computer Science, {\bf 5023}, 389--405, 2008.

\bibitem{EFD}
  D.~J.~Bernstein, T.~Lange, Explicit formulas database.
  \url{http://www.hyperelliptic.org/EFD/}

\bibitem{BernsteinKohelLange}
  D.~J.~Bernstein, D.~Kohel, and T.~Lange.
  Twisted Hessian curves, preprint, 2010.

\bibitem{BernsteinLange-EdwardsComplete}
D.~J.~Bernstein and T.~Lange.
  A complete set of addition laws for incomplete Edwards curves,
  {\it Journal of Number Theory}, {\bf 131}, no.~5, 858--872, 2011.

\bibitem{BirkenhakeLange}
  C.~Birkenhake and H.~Lange.
  {\it Complex abelian varieties}. 
  Grundlehren der Mathematischen Wissenschaften,
  {\bf 302}, {Springer-Verlag}, 2004.



\bibitem{HEHCC-AEC}
  C.~Doche and T.~Lange, {\it Arithmetic of Elliptic Curves}, chapter 13 in
  {\it Handbook of elliptic and hyperelliptic curve cryptography},
  H.~Cohen, G.~Frey, R.~Avanzi, C.~Doche, T.~Lange, K.~Nguyen, F.~Vercauteren, eds.,
  Discrete Mathematics and its Applications, Chapman \& Hall/CRC, 2006.

\bibitem{DIK-2006}
  C.~Doche, T.~Icart, D.~Kohel. 
  Efficient scalar multiplication by isogeny decompositions, 
  in {\it Public Key Cryptography - PKC 2006 (New York)}
  {\it Lecture Notes in Computer Science}, {\bf 3958}, 191--206, 2006.

\bibitem{Edwards}
  H.~Edwards.
  A normal form for elliptic curves.
  Bulletin of the American Mathematical Society, {\bf 44}, 393--422, 2007.

\bibitem{Hartshorne}
  R.~Hartshorne. {\it Algebraic geometry}.
  Graduate Texts in Mathematics, {\bf 52}, Springer-Verlag, 1977.

\bibitem{Hisil-EdwardsRevisited}
  H.~Hisil, K.~K.-H.~Wong, G.~Carter, E.~Dawson, 
  Twisted Edwards curves revisited, 
  {\it Advances in Cryptology --- ASIACRYPT 2008},
  Lecture Notes in Computer Science, {\bf 5350}, Springer, Berlin, 326--343, 2008. 



\bibitem{Echidna}
D.~Kohel et al., 
   Echidna Algorithms (Version 4.0),
   \url{http://echidna.maths.usyd.edu.au/kohel/alg/index.html}, 2013.

\bibitem{Kohel2011}
  D.~Kohel, Addition law structure of elliptic curves,
  {\it Journal of Number Theory}, {\bf 131}, no.~5, 894--919, 2011.

\bibitem{Kohel2012}
  D.~Kohel, Efficient arithmetic on elliptic curves in characteristic~$2$,
  {\it Advances in Cryptology --- Indocrypt 2012},
  Lecture Notes in Computer Science, {\bf 7668}, 378--398, 2012.


\bibitem{LangeRuppert}
H.~Lange and W.~Ruppert.
Complete systems of addition laws on abelian varieties.
{\em Invent. Math.}, {\bf 79} (3), 603--610, 1985.


\bibitem{Magma}
Magma Computational Algebra System (Version 2.19), 
  \url{http://magma.maths.usyd.edu.au/magma/handbook/}, 2013.




\bibitem{Sage}
  W.~A.~Stein et al., 
  Sage Mathematics Software (Version 5.12),
  The Sage Development Team, \url{http://www.sagemath.org}, 2013.

\bibitem{Velu}
  J. Vélu,
  Isogénies entre courbes elliptiques,
  {\it C.\,R.\,Acad.\,Sci.\,Paris, Sér.\,A.}, {\bf 273}, 238--241, 1971.

\end{thebibliography}
\end{document}